\theoremstyle{plain} 
\newtheorem{theoremcounter}{theoremcounter}[section]
\newtheorem{corollary}[theoremcounter]{Corollary}
\newtheorem{lemma}[theoremcounter]{Lemma}
\newtheorem{proposition}[theoremcounter]{Proposition}
\newtheorem{theorem}[theoremcounter]{Theorem}
\theoremstyle{definition}
\newtheorem{definition}[theoremcounter]{Definition}
\newtheorem{example}[theoremcounter]{Example}
\newtheorem{remark}[theoremcounter]{Remark}
\numberwithin{equation}{section}
\renewcommand{\theta}{\vartheta}
\renewcommand{\phi}{\varphi}
\renewcommand{\epsilon}{\varepsilon}
\renewcommand{\subset}{\subseteq}
\newcommand{\N}{\mathbb N}
\newcommand{\NN}{\mathbb N}
\newcommand{\Z}{\mathbb Z}
\newcommand{\ZZ}{\mathbb Z}
\newcommand{\C}{\mathbb C}
\newcommand{\CC}{\mathcal C}
\newcommand{\cC}{\mathcal C}
\newcommand{\AKat}{A_\CC(n)}
\newcommand{\ot}{\otimes}
\newcommand{\idpart}{|}
\newcommand{\paarpart}{\sqcap}
\newcommand{\baarpartbaustein}{\rotatebox{180}{$\sqcap$}}
\newcommand{\baarpart}{
\mathrel{\vcenter{\offinterlineskip \hbox{$\baarpartbaustein$}}}}
\newcommand{\singleton}{\uparrow}
\newcommand{\vierpart}{
\mathrel{\offinterlineskip
\hskip0ex\hbox{$\sqcap$}\hskip -.35ex\hbox{$\sqcap$} \hskip -0.35ex\hbox{$\sqcap$}}}
\newcommand{\crosspart}{
\mathrel{\offinterlineskip
\hskip 0.1ex \hbox{$/$}\hskip -.95ex\hbox{$\backslash$}}}
\newcommand{\primarypart}{
\mathrel{\vcenter{\offinterlineskip
\hbox{$\baarpart$} \vskip -1.3ex \hbox{\hskip1.3ex$/$\hskip-1.2ex$-$} \vskip -1.2ex \hbox{\hskip2.2ex $\paarpart$}}}}
\newcommand{\midmid}{
\mathrel{\vcenter{\offinterlineskip
\vskip -0.2ex \hbox{$\shortmid$} \vskip -0.75ex \hbox{$\shortmid$}}}}
\newcommand{\halflibpart}{
\mathrel{\offinterlineskip
\hbox{$\bigtimes$}\hskip -1.55ex \hbox{$\midmid$} \hskip 1ex}}
\newcounter{PartitionDepth}
\newcounter{PartitionLength}
\newcommand{\partii}[3]{
 \begin{picture}(#3,#1)
 \setcounter{PartitionLength}{#3-#2}
 \setcounter{PartitionDepth}{-1-#1}
 \put(#2,\thePartitionDepth){\line(0,1){#1}}     
 \put(#3,\thePartitionDepth){\line(0,1){#1}}
 \put(#2,\thePartitionDepth){\line(1,0){\thePartitionLength}}
 \end{picture}}
\newcommand{\upparti}[2]{
 \begin{picture}(#2,#1)
 \setcounter{PartitionDepth}{#1}
 \put(#2,0){\line(0,1){#1}}
 \end{picture}}
\newcommand{\uppartii}[3]{
 \begin{picture}(#3,#1)
 \setcounter{PartitionLength}{#3-#2}
 \setcounter{PartitionDepth}{#1}
 \put(#2,0){\line(0,1){#1}}     
 \put(#3,0){\line(0,1){#1}}
 \put(#2,\thePartitionDepth){\line(1,0){\thePartitionLength}}
 \end{picture}}
\DeclareMathOperator{\Hom}{Hom}
\DeclareMathOperator{\End}{End}
\DeclareMathOperator{\mini}{min}
\begin{document}
\title{The combinatorics of an algebraic class of easy quantum  groups}
\author[Sven Raum and Moritz Weber]{Sven Raum$^{(1)}$ and Moritz Weber}
  \setcounter{footnote}{1}
  \footnotetext{Supported by KU Leuven BOF research grant OT/08/032 and by ANR Grant NEUMANN}
\address{Sven Raum, \'Ecole Normale Sup\'erieure de Lyon, Unit\'e de Math\'ematiques Pures et Appliqu\'ees, UMR CNRS 5669,  69364 Lyon Cedex 07, France}
\email{sven.raum@ens-lyon.fr}
\address{Moritz Weber, Saarland University, Fachbereich Mathematik, Postfach 151150,
66041 Saarbr\"ucken, Germany}
\email{weber@math.uni-sb.de}
\date{\today}
\subjclass[2010]{46L65 (Primary); 46L54, 05E10, 16T30 (Secondary)}
\keywords{Quantum groups, easy quantum groups, noncrossing partitions, free probability}

\begin{abstract}
Easy quantum groups are compact matrix quantum groups, whose intertwiner spaces are given by the combinatorics of categories of partitions. This class contains the symmetric group $S_n$ and the orthogonal group $O_n$ as well as Wang's quantum permutation group $S_n^+$ and his free orthogonal quantum group $O_n^+$.
In this article, we study a particular class of categories of partitions to each of which we assign a subgroup of the infinite free product of the cyclic group of order two. This is an important step in the classification of all easy quantum groups and we deduce that there are uncountably many of them. 
We focus on the combinatorial aspects of this assignment, complementing the quantum algebraic point of view presented in another article.
\end{abstract}

\maketitle

\section*{Introduction}
The main objects of this article are \emph{categories of partitions}. Given $k+l$ points, $k$ of which are placed on an upper row and $l$ on a lower one, we can partition this set into a disjoint union of subsets. To describe this partition, we draw lines connecting the points belonging to the same subset. The collection of all such partitions is denoted by $P(k,l)$ and $P$ denotes the union of all $P(k,l)$. A category of partitions $\CC$ is a subset of $P$ which is closed under certain operations (see Section \ref{SectCatOfPart}) corresponding to operations of a tensor category.

Categories of partitions were introduced by Banica and Speicher in 2009 \cite{BanicaSpeicher09} in order to study certain compact matrix quantum groups, so called \emph{easy quantum groups}. Although our article is mainly of combinatorial nature, let us very briefly sketch the idea of easy quantum groups and refer to \cite{BanicaSpeicher09}, \cite{Weber13}, \cite{RaumWeberSemiDirect} or  \cite{RaumWeberVolleKlassif} for details.

In operator algebras, the study of noncommutative spaces requires a more general notion of symmetries. 
In 1987, Woronowicz \cite{Woronowicz87} gave a definition of \emph{compact matrix quantum groups}, generalizing compact Lie groups $G\subset M_n(\C)$. The idea is to pass from $G$ to the algebra $C(G)$ of continuous functions and to study noncommutative variants $A$ of it. Here, the group law $\mu:G\times G\to G$ is dualized to a $^*$-homomorphism $\Delta:A\to A\otimes A$. Typically compact matrix quantum groups are constructed as universal C$^*$-algebras generated by elements $u_{ij}$, $1\leq i,j\leq n$ such that the matrix $u$ formed by the $u_{ij}$'s satisfies some relations. In the case where $A=C(G)$ is the algebra of functions over a group $G\subset M_n(\C)$, the C$^*$-algebra is commutative and the elements $u_{ij}$ are the functions in $C(G)$ evaluating the entries of the matrices in the group $G$.

By a Tannaka-Krein result of Woronowicz \cite{Woronowicz88}, a category of partitions gives rise to a tensor category and hence to intertwiner spaces of a compact matrix quantum group. If a compact matrix quantum group arises in this way, it is called an easy quantum group \cite{BanicaSpeicher09}. Thus, Banica and Speicher's easy quantum groups carry a very intrinsic combinatorial data. This interplay between combinatorial and operator algebraic aspects turned out to be quite fruitful (see for instance \cite{RaumWeberVolleKlassif} for an overview of results on easy quantum groups and their applications). For instance they give rise to appropriate symmetries in Voiculescu's free probability (see K\"ostler and Speicher's free de Finetti result \cite{KoestlerSpeicher09} and more references in \cite{RaumWeberVolleKlassif}).
The class of easy quantum groups interpolates the symmetric group $S_n$ and Wang's \cite{Wang95} free orthogonal quantum  group $O_n^+$, amongst others containing the quantum permutation group $S_n^+$ introduced by Wang \cite{Wang98}.

In the present article, we contribute to the classification of easy quantum groups and to the classification of categories of partitions respectively.
This project has been started by Banica and Speicher in \cite{BanicaSpeicher09}, and it has been continued by Banica,  Curran, Speicher \cite{BanicaCurranSpeicher10} and the second author \cite{Weber13}.
It turned out that in the case of nonhyperoctahedral categories of partitions, a complete list of 13 categories is available \cite{BanicaCurranSpeicher10}, \cite{Weber13}. In the hyperoctahedral case however (categories containing the four block partition $\vierpart$ but not the double singleton $\singleton\otimes\singleton$), the situation was quite unclear (see eg. \cite{BanicaCurranSpeicher10}).

We study the class of hyperoctahedral categories containing the pair positioner partition $\primarypart$ -- the \emph{group-theoretical categories of partitions}. They correspond to quantum subgroups of the free hyperoctahedral quantum group $H_n^+$ of Banica, Bichon and Collins \cite{BanicaBichonCollins07} satisfying the additional requirement on the generators:
\[u_{ij}^2u_{kl}=u_{kl}u_{ij}^2\]
Surprisingly, these categories carry a quite algebraic structure: Since the partition $\primarypart$ allows us to shift pairs of points to arbitrary positions, the only important information about such a category is contained in its partitions that look like reduced words in the free product $\Z_2^{*\infty}$ of the cyclic group of order two. We hence find a map $F$ assigning to each group-theoretical category of partitions a subgroup of the infinite free product $\Z_2^{*\infty}$. We are able to investigate its image, so that we obtain a lattice isomorphism between group-theoretical categories of partitions and a specific class of normal subgroups of $\Z_2^{*\infty}$ (see our main theorem, Theorem \ref{thm:F-is-1-1}).

This article is based on the preprint \cite{RaumWeberPreprint} which is not intended for publication. In the present work, we focus on the clear exposition of the combinatorial material presented in the preprint.
Hence, we aim to clarify the existence of the one-to-one assignment $F$ from a combinatorial point of view.
In another article \cite{RaumWeberSemiDirect} we focus on the quantum algebraic point of view and the assignment is derived via diagonal subgroups of the quantum groups (even in a much more general setting).
Let us remark that the second article \cite{RaumWeberSemiDirect} extends the insight obtained in the preprint \cite{RaumWeberPreprint} by far. However, in \cite{RaumWeberSemiDirect}, we develop an approach different from the one of the preprint, using less combinatorics. In the present article we present the purely combinatorial approach.

  Consequences of the bijectivity of the assignment $F$ may be found in \cite{RaumWeberSemiDirect}. Nevertheless, some of them could also be directly derived from the results of the present article. For instance, we infer that there is an embedding of the class of all varieties of groups into categories of partitions and hence there are uncountably many categories or rather uncountably many easy quantum groups. Moreover, the problem of classifying all group-theoretical easy quantum groups is transferred to a classification problem in group theory which includes the problem of finding all varieties of groups. This clearly destroys the hope to give an explicit list of all easy quantum groups. Nevertheless, our assignment $F$ clarifies a lot about the structure of group-theoretical easy quantum groups. 
The case of non group-theoretical hyperoctahedral quantum groups is treated in \cite{RaumWeberVolleKlassif}, completing the classification of all (orthogonal) easy quantum groups.

In Section \ref{SectCatOfPart} we give an introduction into the main combinatorial object,  categories of partitions. In Section \ref{SectSingleLeg} we  work out a normal form for partitions in group-theoretical categories of partitions. This enables us to assign certain groups to the categories. This assignment $F$ is constructed and investigated in the subsequent section. A short introduction into easy quantum groups is given in Section \ref{SectConsequences}, where we summarize consequences of our classification. We end this article with an analysis of the C$^*$-algebraic relations related to group-theoretical categories of partitions.

\section*{Acknowledgements}
The first author thanks Roland Speicher for inviting him to Saarbr\"ucken, where this work was initiated in June 2012.  Both authors thank Stephen Curran for pointing out an error at an early stage of our work on the preprint \cite{RaumWeberPreprint}, of which this article is a continuation.  We are very grateful to Teodor Banica and Adam Skalski for useful discussions on the same preprint.

\section{Categories of partitions}\label{SectCatOfPart}

The starting point of this article are categories of partitions. Let us give a brief introduction into these combinatorial objects. See also \cite[Section 1]{RaumWeberVolleKlassif}.

A \emph{partition} $p$ is given by  $k\in\N_0$ upper and $l\in\N_0$ lower points, which may be connected by some lines. Thus, the set of $k+l$ points is partitioned into several subsets called \emph{blocks}. The set of all such partitions is denoted by $P(k,l)$ and we write $P$ for the collection of all $P(k,l)$ for $k,l\in\N_0$. As an example, consider the following partition where we label the upper and lower points by the numbers $1,2,3$ and $1',2',3'$, respectively. It is given by a four block consisting of the points $1,2,2',3'$ and a block of size two consisting of $3$ and $1'$.

\setlength{\unitlength}{0.5cm}
\begin{center}
  \begin{picture}(5,6)
    \thicklines
    \put(0,6){\partii{1}{1}{2}}
    \put(1.2,1){\line(1,2){2}}
    \put(1.8,4){\line(1,-2){1}}
    \put(0,1){\uppartii{1}{2}{3}}
    \put(1,5.5){1}
    \put(2,5.5){2}
    \put(3,5.5){3}
    \put(1,0){$1'$}
    \put(2,0){$2'$}
    \put(3,0){$3'$}
  \end{picture}
\end{center}

This partition is called the \emph{pair positioner partition}, and it is denoted by $\primarypart\;\in P(3,3)$. It will play a crucial role in this article. Further examples are the \emph{singleton partition} $\singleton\;\in P(0,1)$ consisting of a single lower point; the \emph{double singleton partition} $\singleton\otimes\singleton\;\in P(0,2)$ on two non-connected lower points; the \emph{pair partition} $\paarpart\;\in P(0,2)$ on two connected lower points; the  \emph{identity partition} $\idpart\;\in P(1,1)$ connecting one upper with one lower point; the \emph{four block partition} $\vierpart\;\in P(0,4)$ connecting four lower points; the  \emph{crossing partition} $\crosspart\;\in P(2,2)$ on two crossing pairs $\{1,2'\}$ and $\{2,1'\}$; the \emph{half-liberating partition} $\halflibpart\;\in P(3,3)$ given by the blocks  $\{1,3'\}$, $\{2,2'\}$ and $\{3,1'\}$; and the \emph{s-mixing partition} $h_s\in P(0,2s)$ for $s\in \N$ given by two blocks connecting $s$ odd points and $s$ even ones, respectively.
If $k = l = 0$, then $P(0, 0)$ consists only of the \emph{empty partition}  $\emptyset$. 

We view partitions also as words in the following way. Let $p\in P(0,l)$ be a partition with $k$ blocks. We label all points of the first block by a letter $a_1$, those of the second by $a_2$ and so on. We obtain a word in the letters $a_1,\ldots,a_k$. For instance, the \linebreak $s$-mixing partition $h_s$ is given by the word $h_s=abab\ldots ab$ of length $2s$.
Conversely, if we are given a word of length $l$ in $k$ letters, we obtain a partition $p\in P(0,l)$ by connecting all equal letters by lines. 

In \cite{BanicaSpeicher09} Banica and Speicher introduced several operations on the set $P$ of partitions.
\begin{itemize}
\item  The \emph{tensor product} of two partitions $p\in P(k, l)$ and $q\in P(k', l')$ is the partition $p\otimes q\in P(k+k', l+l')$ obtained by \emph{horizontal concatenation} (writing $p$ and $q$ next to each other), i.e. the first $k$ of the $k+k'$ upper points are connected by $p$ to some of the first $l$ of the $l+l'$ lower points, whereas $q$ connects the remaining $k'$ upper points with the remaining $l'$ lower points.
\item The \emph{composition} of two partitions $p\in P(k, l)$ and $q\in P(l, m)$ is the partition  $qp\in P(k, m)$ obtained by \emph{vertical concatenation}. Connect $k$ upper points by $p$ to $l$ middle points and then continue the lines by $q$ to $m$ lower points. This yields a partition, connecting $k$ upper points with $m$ lower points. The $l$ middle points and lines not connected to the remaining $k+m$ points are removed.
\item The \emph{involution} of a partition $p\in P(k, l)$ is the partition $p^{*}\in P(l, k)$ obtained by turning $p$ upside down.
\item We also have a \emph{rotation} on partitions. Let $p\in P(k, l)$ be a partition. Shifting the very left upper point to the left of the lower points --  it then still belongs to the same block as before -- gives rise to a partition in $P(k-1, l+1)$, called a \emph{rotated version} of $p$. We may also shift the left lower point to the upper line, and we may rotate on the right-hand side of the two lines of points as well. In particular, for a partition $p\in P(0, l)$, we may rotate the very left point to the very right and vice-versa.
\end{itemize}

The above operations are called the \emph{category operations}. 
A collection $\CC$ of subsets $\CC(k, l)\subseteq P(k, l)$ (for every $k, l\in\N_{0}$) is a \emph{category of partitions} if it is closed under the category operations and if the identity partition $\idpart\in P(1, 1)$ is in $\CC$ (by rotation, we then also have $\paarpart\in\CC$).
Note that rotation may be deduced from the other axioms using $\paarpart\in\CC$, see \cite{BanicaSpeicher09}.
Examples of categories of partitions include $P$, $P_{2}$ (the set of all \emph{pair partitions}, i.e. all blocks have size two), $NC$ (all \emph{noncrossing partitions}, i.e. partitions which may be drawn such that the lines do not cross) and $NC_{2}$. 
We often restrict our considerations to partitions without upper points when working with categories of partitions. This is possible, since $p\in P(k,l)$ is in $\CC$ if and only if a rotated version $p'\in P(0,k+l)$ is in $\CC$.
We write $\CC=\langle p_1,\ldots, p_n\rangle$, if $\CC$ is the smallest category of partitions containing the partitions $p_1,\ldots, p_n$. We say that $\CC$ is \emph{generated} by $p_1,\ldots,p_n$.
For example, we have $P=\langle\crosspart,\singleton,\vierpart\rangle$ whereas $NC=\langle\singleton,\vierpart\rangle$ (see \cite{Weber13}).

The terms ``\emph{applying the pair partition} to a partition $p\in P(k,l)$'' or ``\emph{using the pair partition}'' refer to the composition of $p$ with $\idpart^{\otimes\alpha}\otimes\baarpart\otimes\idpart^{\otimes\beta}$ for suitable $\alpha$ and $\beta$ (possibly iteratively). Note that this  operation can be done inside a category, i.e. if $p$ is in $\CC$, then also the partition resulting from the above procedure lies in $\CC$.

\section{Group-theoretical categories of partitions} \label{SectSingleLeg}

A category of partitions is called \emph{hyperoctahedral} if it contains the four block partition $\vierpart$ but not the double singleton $\singleton\otimes\singleton$.
In the classification of categories of partitions (\cite{BanicaSpeicher09}, \cite{BanicaCurranSpeicher10}, \cite{Weber13}), it turned out that this case is complicated. In the \emph{nonhyperoctahedral} case in turn, there are exactly 13 categories (\cite[Th. 3.12]{Weber13}). 
In this article we study a subclass of hyperoctahedral categories.

\begin{definition}
 A category $\CC$ of partitions is called \emph{group-theoretical}, if it contains the pair positioner partition $\primarypart$.
\end{definition}

We mainly intend to shed some light on hyperoctahedral categories, thus we want to understand the class of hyperoctahedral group-theoretical categories of partitions. At the same time, there are only two nonhyperoctahedral categories amongst the group-theoretical categories, namely $\langle\crosspart, \vierpart,\singleton\otimes\singleton\rangle$ and $\langle\crosspart, \vierpart,\singleton\rangle$. Hence, it is basically equivalent to study \emph{all} group-theoretical categories instead of only hyperoctahedral ones.
For the non group-theoretical hyperoctahedral case, we refer to \cite{RaumWeberVolleKlassif}.

The crucial feature in group-theoretical categories is the fact that it suffices to  study partitions in a ``normal'' form. Let us prepare the necessary tools for doing so.
Let $p\in P(0,l)$ be a partition with $m$ blocks. We view it as a word in the letters $a_1,\ldots,a_k$, where $k_j\in\N$:
\[p=a_{i(1)}^{k_1}a_{i(2)}^{k_2}\ldots a_{i(n)}^{k_n}\]

\begin{definition}
 A partition $p\in P(0,l)$ is in \emph{single leg form} if $p$ is -- as a word -- of the form
\[p=a_{i(1)}a_{i(2)}\ldots a_{i(n)}\;,\]
such that $a_{i(j)}\neq a_ {i(j+1)}$ for $j=1,\ldots,n-1$.
In other words, a partition is in single leg form if no two consecutive points belong to the same block.
If $\mathcal C$ is a set of partitions, we denote by $\mathcal C_{sl}$ the set of all partitions $p\in\mathcal C$ in single leg form.
\end{definition}

We now derive how group-theoretical categories of partitions are determined by their partitions in single leg form.
Let us introduce the following 
partitions $k_l\in P(l+2,l+2)$ for $l\in \N$. They are given by a four block  $\{1,1',l+2,(l+2)'\}$ and pairs $\{2,2'\}, \{3,3'\},\ldots,\{l+1,(l+1)'\}$. The following picture illustrates the partition $k_l$ -- note that the waved line from $1'$ to $l+2$ is \emph{not} connected to the lines from $2$ to $2'$, from $3$ to $3'$ etc.
\setlength{\unitlength}{0.5cm}
\begin{center}
  \begin{picture}(14,5)
    \put(0.5,2.5){$k_l\;\;=$}
    \put(1.9,1.5){\upparti{2}{1}}
    \put(1.9,1.5){\upparti{2}{2}}
    \put(1.9,1.5){\upparti{2}{3}}
    \put(7,2){$\ldots$}
    \put(1.9,1.5){\upparti{2}{8}}
    \put(1.9,1.5){\upparti{2}{9}}
    \put(3,0){$1'$}
    \put(4,0){$2'$}
    \put(5,0){$3'$}
    \put(7,0){$\ldots$}
    \put(8.4,0){$(l+1)'$}
    \put(11,0){$(l+2)'$}
    \put(3,4){$1$}
    \put(4,4){$2$}
    \put(5,4){$3$}
    \put(7,4){$\ldots$}
    \put(9.1,4){$l+1$}
    \put(10.9,4){$l+2$}
    
    \put(7.2,1.5){\oval(8,2)[tl]}
    \put(7.15,3.5){\oval(8,2)[br]}
  \end{picture}
\end{center}

We check that $k_1$ is in a category $\mathcal C$ if and only if $k_l$ is in $\cC$ for all $l\in\N$. This follows from the construction
$k_{l+1}=(\idpart^{\otimes l+1}\otimes\baarpart\otimes\idpart^{\otimes 2})(k_l\otimes k_1)(\idpart^{\otimes l+1}\otimes \paarpart\otimes\idpart^{\otimes 2})$. 
The pair positioner partition $\primarypart$ is a rotated version of $k_1$, thus $\primarypart\in\mathcal C$ if and only if $k_1\in\mathcal C$.

\begin{lemma}\label{LemBlockVerbinden}
 Let $\CC$ be a group-theoretical category of partitions  and let $p\in\CC$.
Then, we can connect arbitrary blocks of $p$ inside of $\cC$, i.e. the partition $p'$ obtained from $p$ by combining two arbitrary blocks of $p$ is again in $\cC$.
\end{lemma}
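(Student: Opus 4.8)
The plan is to realise the partition $p'$ obtained from $p$ by amalgamating two prescribed blocks as a composition $r\,p$ for a suitable partition $r\in\CC$ built out of identity partitions and one of the $k_l$'s. Since $p\in\CC$ if and only if some rotated version lies in $\CC\cap P(0,n)$, and since rotation preserves the block structure, I would first reduce to the case $p\in P(0,n)$. Fix the two blocks $B\neq B'$ of $p$ that are to be combined, choose points $i\in B$ and $j\in B'$ with $i<j$, and set $m:=j-i-1\geq 0$.

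As a preliminary I would note that $\vierpart\in\CC$: applying the pair partition to two neighbouring lower points of $k_1$, one lying on the pair block $\{2,2'\}$ and the other on the four block, glues these two blocks together, and a rotation of the resulting partition is $\vierpart$. Hence the partition $k_0\in P(2,2)$ whose only block is $\{1,1',2,2'\}$ -- the natural $l=0$ analogue of the $k_l$ -- also lies in $\CC$. Combined with the fact, recorded before the lemma, that $k_1\in\CC$ forces $k_l\in\CC$ for every $l\in\N$, we obtain $k_l\in\CC$ for all $l\geq 0$.

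The main step is to set
\[r := \idpart^{\otimes(i-1)}\otimes k_m\otimes\idpart^{\otimes(n-j)}\ \in\ P(n,n),\]
which lies in $\CC$ because $k_m,\idpart\in\CC$ and $\CC$ is closed under tensor products, and then to verify that $r\,p=p'$. In the vertical concatenation defining $r\,p$, a lower point of $p$ whose position is outside $\{i,j\}$ is transported to the corresponding output point by an identity strand (positions $<i$ or $>j$) or by one of the through-pairs of $k_m$ (positions strictly between $i$ and $j$); so every block of $p$ other than $B$ and $B'$ survives unchanged, and the corresponding output points inherit it. On the other hand the four block of $k_m$ runs through the $i$-th and $j$-th middle points, thereby identifying the $i$-th and $j$-th points of $p$ and amalgamating $B$ with $B'$; since this four block also meets two of the lower points of $k_m$, the amalgamated block $B\cup B'$ is visible on the output. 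Hence $r\,p=p'\in\CC$, and undoing the initial rotation yields the assertion for the original $p$.

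The only case that actually needs the preliminary observation is $j=i+1$, i.e.\ when the two blocks to be merged contain neighbouring points (for instance two singleton blocks); there we genuinely need $k_0$, whereas for $j\geq i+2$ the partitions $k_m$ with $m\geq 1$ recorded before the lemma already suffice. Beyond this, the argument is a routine tracking of connected components through a three-row diagram ($p$ on top, $k_m$ inside the middle tensor factor of $r$, identity strands on the two sides), so I do not expect a genuine obstacle -- only the care needed to write the composition down correctly.
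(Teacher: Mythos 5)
Your proof is correct and follows essentially the same route as the paper's one-line argument, namely composing $p$ with a suitable $\idpart^{\otimes\alpha}\otimes k_m\otimes\idpart^{\otimes\beta}$ so that the four block of $k_m$ joins one point of each of the two prescribed blocks while all other points pass through unchanged. Your explicit treatment of the adjacent-points case via $k_0$ (a rotated version of $\vierpart$) fills in a genuine edge case that the paper's proof leaves implicit in the word ``suitable''.
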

\begin{proof}
Assume $p\in P(0,l)$ and compose $p$ with a suitable $\idpart^{\otimes \alpha}\otimes k_l\otimes\idpart^{\otimes \beta}$, which is in $\CC$ by the above considerations.
\end{proof}

\begin{remark} \label{RemKL}
In \cite[Lemma 4.2]{BanicaCurranSpeicher10} the  partitions $k_l\in P(l+2,l+2)$ were used to define the \emph{higher hyperoctahedral series} $H_n^{[s]}$. The category  $\langle\primarypart\rangle$ corresponds to $H_n^{[\infty]}$ of \cite{BanicaCurranSpeicher10}.
\end{remark}

\begin{lemma}\label{LemThreeRow}\label{LemReduzieren}
 Let $\cC$ be a group-theoretical category of partitions and let $p\in P(0,l)$ be a partition, seen as the word $p=a_{i(1)}^{k_1}a_{i(2)}^{k_2}\ldots a_{i(n)}^{k_n}$.
 We put $k_j':= \begin{cases} 1 &\textnormal{if $k_j$ is odd}\\ 0 &\textnormal{if $k_j$ is even}\end{cases}$, and $p':=a_{i(1)}^{k_1'}a_{i(2)}^{k_2'}\ldots a_{i(n)}^{k_n'}$. It is possible that $p'=\emptyset$.
 Then $p\in\mathcal C$ if and only if $p'\in\mathcal C$. 
\end{lemma}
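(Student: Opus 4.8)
The plan is to prove the two implications of the equivalence separately; only the direction ``$p'\in\cC\Rightarrow p\in\cC$'' will use that $\cC$ is group-theoretical. For ``$p\in\cC\Rightarrow p'\in\cC$'' I would pass from $p$ to $p'$ by repeatedly \emph{applying the pair partition}. Within a run $a_{i(j)}^{k_j}$ of $p$ any two consecutive points lie in the same block, so composing $p$ with a partition $\idpart^{\otimes\alpha}\otimes\baarpart\otimes\idpart^{\otimes\beta}$ that caps off two neighbouring points of that run yields a partition again lying in $\cC$ (as noted in Section~\ref{SectCatOfPart}) in which this block has lost two points and the run has become shorter by two. Iterating this for every run until its length has dropped from $k_j$ to $k_j'\in\{0,1\}$ turns $p$ into $p'$, so $p'\in\cC$. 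When $p'=\emptyset$ this only asserts $\emptyset\in\cC$, which holds in any category since $\paarpart\in\cC$ and capping $\paarpart$ by $\baarpart$ gives $\emptyset$.

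For the converse I would build $p$ up from $p'$ using two kinds of moves that both preserve membership in $\cC$: (i) inserting a new block of size two at a prescribed position, i.e.\ composing with a partition $\idpart^{\otimes\alpha}\otimes\paarpart\otimes\idpart^{\otimes\beta}\in\cC$; and (ii) merging two blocks, which is legitimate by Lemma~\ref{LemBlockVerbinden} because $\cC$ is group-theoretical. First, for each run $a_{i(j)}^{k_j}$ of $p$ I insert $\lfloor k_j/2\rfloor$ fresh two-element blocks at the location of that run --- placed immediately beside the unique surviving point of the run if $k_j$ is odd, and in the gap between the nearest surviving runs on either side if $k_j$ is even. As insertions respect the left-to-right order, the resulting partition $q$ carries exactly the points of $p$ in the same sequence of runs and is a refinement of $p$ as a set partition. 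Second, for each block $B$ of $p$ I apply Lemma~\ref{LemBlockVerbinden} repeatedly to fuse into one block all blocks of $q$ sitting on the points that should form $B$ --- namely the at most one surviving $p'$-block on the odd runs of $B$, together with every two-element block inserted on a run of $B$. Carrying this out for all blocks of $p$ produces precisely $p$, hence $p\in\cC$.

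The part calling for the most care is the bookkeeping in this second step, especially the case of a block $B$ of $p$ all of whose runs have even length: then $B$ does not occur in $p'$ at all and must be assembled entirely out of inserted two-element blocks before being fused. One has to verify that the two-element blocks can be placed at the correct positions and that the block-by-block fusing reconstructs $p$ exactly without creating any spurious identifications. The remaining verifications are routine, since composition with tensor products of $\idpart$, $\paarpart$ and $\baarpart$ is a category operation and therefore keeps each intermediate partition inside $\cC$.
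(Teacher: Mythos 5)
Your proof is correct and follows essentially the same route as the paper: the forward implication by capping neighbouring points of each run with the pair partition, and the converse by inserting pair blocks and fusing them via Lemma~\ref{LemBlockVerbinden}. The paper's own proof is just a terser version of this (it explicitly notes, as you do, that the insertion step also handles runs with $k_j'=0$).
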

\begin{proof}
If $p\in\cC$, then $p'\in \cC$ using the pair partition. For the converse, insert a pair partition $\paarpart$ (via composition with $\idpart^{\otimes\alpha}\otimes\paarpart\otimes\idpart^{\otimes\beta}$) next to $a_{i(j)}^{k_j'}$ in $p'$ and use Lemma~\ref{LemBlockVerbinden}. This increases $k_j'$ by two -- even if $k_j'$ is zero. Iteratively we obtain $p\in\CC$.
\end{proof}

The above lemma can be extended to arbitrary partitions $p\in P(k,l)$, by rotation.
If $p\in P(0,l)$ is a partition seen as the word $p=a_{i(1)}^{k_1}a_{i(2)}^{k_2}\ldots a_{i(n)}^{k_n}$ then the partition $p'=a_{i(1)}^{k'_1}a_{i(2)}^{k'_2}\ldots a_{i(n)}^{k'_n}$ of Lemma \ref{LemReduzieren} is not necessarily in single leg form -- even if $a_{i(j)}\neq a_{i(j+1)}$ is satisfied. For instance, $p=ab^2acacaca$ yields $p'=a^2cacaca$. However, a finite iteration of the procedure as in Lemma \ref{LemReduzieren} yields a partition $q$ in single leg form. This partition $q$ (possibly the empty partition $\emptyset\in P(0,0)$) is called the \emph{single leg partition associated to} $p$ or the \emph{single leg version of} $p$. Note that every partition has a unique single leg version. The converse is not true, different partitions may have the same associated single leg partitions. Using Lemma \ref{LemReduzieren}, we prove the main result of this section.

\begin{proposition}\label{LemmaX}
 Let $\cC$ be a group-theoretical category of partitions. Then, a partition is in $\cC$ if and only if its single leg version is in $\cC$.
\end{proposition}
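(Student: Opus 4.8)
The plan is to reduce the statement to Lemma \ref{LemReduzieren} by an iteration argument. One direction is essentially immediate: if $p \in \cC$, then applying the pair partition repeatedly (as in the proof of Lemma \ref{LemReduzieren}) removes pairs of consecutive equal points, and a finite number of such steps produces the single leg version $q$ of $p$; since each step stays inside $\cC$, we get $q \in \cC$. The content is in the converse.

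For the converse, suppose the single leg version $q$ of $p$ lies in $\cC$. The idea is to reverse the reduction process: starting from $q$, I would reconstruct $p$ by a finite sequence of moves, each of which either (i) inserts a pair $\paarpart$ of consecutive equal points somewhere in the word (composition with $\idpart^{\otimes\alpha}\otimes\paarpart\otimes\idpart^{\otimes\beta}$), which stays in $\cC$, or (ii) merges two blocks, which stays in $\cC$ by Lemma \ref{LemBlockVerbinden}. Concretely, I would induct on the total number of points $l$ of $p$. If $p$ is already in single leg form there is nothing to prove. Otherwise, $p$ has two consecutive points in the same block; writing $p = a_{i(1)}^{k_1}\cdots a_{i(n)}^{k_n}$ as a word, this means some $k_j \geq 2$, or some $a_{i(j)} = a_{i(j+1)}$. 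Applying one reduction step of the type in Lemma \ref{LemReduzieren} (remove a pair from one block, or note that two adjacent equal letters can be merged and then reduced) yields a partition $\tilde p$ with strictly fewer points whose single leg version is still $q$. By the induction hypothesis applied to $\tilde p$, we have $\tilde p \in \cC$, and then reversing that single step --- inserting a pair via the pair partition and reconnecting blocks via Lemma \ref{LemBlockVerbinden} --- shows $p \in \cC$.

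The one technical point to handle carefully is the bookkeeping between the two descriptions of a partition (as a genuine partition of $k+l$ points versus as a word in the letters $a_1,\dots,a_k$): reducing an exponent $k_j$ by $2$, or merging two blocks that happen to become adjacent, must be matched precisely with a composition by a partition of the form $\idpart^{\otimes\alpha}\otimes\paarpart\otimes\idpart^{\otimes\beta}$ together with an application of Lemma \ref{LemBlockVerbinden}, and one must check that these operations are genuinely invertible in the sense that the reverse operation is again a legal category operation landing in $\cC$. This is exactly where Lemma \ref{LemBlockVerbinden} is essential --- it lets us freely merge blocks, so the only real move is insertion/deletion of a consecutive pair, and insertion is manifestly a category operation inside $\cC$. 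I expect this matching of the word-level manipulation with partition-level category operations to be the main (though routine) obstacle; the combinatorial heart, namely that consecutive equal points can be freely added or removed, is already contained in Lemmas \ref{LemBlockVerbinden} and \ref{LemReduzieren}.
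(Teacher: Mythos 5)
Your proposal is correct and follows essentially the same route as the paper, whose entire proof is ``Iteration of Lemma \ref{LemReduzieren}''; your induction on the number of points, with each step being a pair insertion/deletion combined with Lemma \ref{LemBlockVerbinden}, is just a careful unpacking of that iteration. The ``technical point'' you flag is exactly what the paper's Lemma \ref{LemReduzieren} already resolves, so nothing further is needed.
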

\begin{proof}
 Iteration of Lemma \ref{LemReduzieren}.
\end{proof}

The set $\mathcal C_{sl}$ turns out to be a complete invariant for group-theoretical categories.

\begin{corollary}\label{PropCSL}
 Let $\mathcal C$ and $\mathcal D$ be group-theoretical categories.
\begin{itemize}
 \item[(a)] The category $\langle\mathcal C_{sl},\primarypart\rangle$ coincides with $\mathcal C$.
 \item[(b)] We have $\mathcal C_{sl}=\mathcal D_{sl}$ if and only if $\mathcal C=\mathcal D$.
 \item[(c)] We have $\mathcal C_{sl}\subseteq\mathcal D_{sl}$ if and only if $\mathcal C\subseteq\mathcal D$.
\end{itemize}
\end{corollary}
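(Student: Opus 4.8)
The plan is to derive all three statements from Proposition~\ref{LemmaX}, which says that a partition lies in a group-theoretical category if and only if its single leg version does. Throughout I would use that $\primarypart\in\CC$ and $\primarypart\in\DD$ since both categories are group-theoretical, and that $\primarypart$ itself is in single leg form (it is a rotated version of $k_1$, whose only nontrivial identifications are between non-adjacent points), so $\primarypart\in\CC_{sl}$ and $\primarypart\in\DD_{sl}$.

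For part~(a), first I would observe $\langle\CC_{sl},\primarypart\rangle\subseteq\CC$: every generator is in $\CC$ (the partitions of $\CC_{sl}$ trivially, and $\primarypart$ because $\CC$ is group-theoretical), and $\CC$ is a category, hence contains the category they generate. For the reverse inclusion, take $p\in\CC$ and let $q$ be its single leg version. By Proposition~\ref{LemmaX}, $q\in\CC$, and $q$ is in single leg form, so $q\in\CC_{sl}$. Since $\langle\CC_{sl},\primarypart\rangle$ is a group-theoretical category (it contains $\primarypart$) containing $q$, Proposition~\ref{LemmaX} applied to it gives that every partition whose single leg version is $q$ lies in $\langle\CC_{sl},\primarypart\rangle$; in particular $p$ does. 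Hence $\CC\subseteq\langle\CC_{sl},\primarypart\rangle$.

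Part~(c) is the technical heart. The direction $\CC\subseteq\DD\Rightarrow\CC_{sl}\subseteq\DD_{sl}$ is immediate from the definitions: if $p\in\CC_{sl}$ then $p\in\CC\subseteq\DD$ and $p$ is in single leg form, so $p\in\DD_{sl}$. Conversely, assume $\CC_{sl}\subseteq\DD_{sl}$ and let $p\in\CC$; let $q$ be its single leg version. By Proposition~\ref{LemmaX}, $q\in\CC$, so $q\in\CC_{sl}\subseteq\DD_{sl}\subseteq\DD$. Now $\DD$ is group-theoretical, and $q$ is the single leg version of $p$, so Proposition~\ref{LemmaX} (applied in $\DD$) gives $p\in\DD$. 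Thus $\CC\subseteq\DD$. Part~(b) then follows formally from~(c) applied in both directions, or directly by the same argument with equalities. I would present (b) as a one-line corollary of (c).

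I do not expect a serious obstacle here; the only point requiring care is making sure that at each step the category in play is genuinely group-theoretical so that Proposition~\ref{LemmaX} is applicable --- this is why the explicit remark that $\primarypart$ is in single leg form, hence belongs to $\CC_{sl}$ and to $\langle\CC_{sl},\primarypart\rangle$, matters in part~(a). Everything else is a routine unwinding of the definition of single leg version together with the fact that every partition has a \emph{unique} such version, as recorded just before Proposition~\ref{LemmaX}.
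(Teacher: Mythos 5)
Your argument is correct and follows the paper's route: part (a) is proved exactly as in the paper (both inclusions via Proposition~\ref{LemmaX}), and your direct proof of (c) is just an unwound version of the paper's one-line deduction of (b) and (c) from (a), namely $\mathcal C=\langle\mathcal C_{sl},\primarypart\rangle\subseteq\langle\mathcal D_{sl},\primarypart\rangle=\mathcal D$.

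One side remark you make is false, though harmlessly so: $\primarypart$ is \emph{not} in single leg form. Rotated to $P(0,6)$ and read as a word it is $baabaa$, which has consecutive points in the same block (indeed its single leg version is the empty partition), so $\primarypart\notin\mathcal C_{sl}$ in general. This does not affect your proof, because $\primarypart\in\langle\mathcal C_{sl},\primarypart\rangle$ holds by construction of the generated category, which is all you actually need to invoke Proposition~\ref{LemmaX} there; but the justification you flag as the ``point requiring care'' should be replaced by this trivial observation.
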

\begin{proof}
 (a) We have  $\langle\mathcal C_{sl},\primarypart\rangle\subset\mathcal C$. On the other hand, if $p\in \cC$, we consider its associated simplified partition $p'$ which is in $\cC_{sl}$ by Proposition \ref{LemmaX}. Thus, $p'$ is in $\langle\mathcal C_{sl},\primarypart\rangle$. Again by Proposition \ref{LemmaX}, we infer $p\in\langle\mathcal C_{sl},\primarypart\rangle$.
 
 (b) and (c) follow directly from (a).
\end{proof}

As a consequence, we can always choose the generators of a group-theoretical category to be in single leg form. Furthermore, we may study group-theoretical categories by analyzing the structure of $\mathcal C_{sl}$. This is the crucial observation from the combinatorial point of view: The partitions in single leg form completely determine a group-theoretical category -- and they are ``group-theoretical'' indeed, as may be seen in the next section.

\section{Associating groups to group-theoretical categories of partitions}\label{SectGroups}

Denote by $a_1, a_2 ,\ldots$ the generators of the infinite free product $\ZZ_2^{* \infty}$ of the cyclic group of order two.
Let $p\in P(k,m)$ be a partition and choose a labelling $l = (a_{i(1)}, a_{i(2)}, \dotsc, a_{i(k+m)})$ of the points of $p$.
We start our labelling with the lower left point of $p$ by $a_{i(1)}$ and then go around counterclockwise. We require that all points of the same block are labelled by the same letter. Furthermore, mutually different blocks are labelled by mutually different letters.
Rotating the $k$ upper points of $p$ to the right of the $l$ lower points, we obtain an element $w(p,l)$ in the free product $\ZZ_2^{*\infty}$.

\begin{remark}\label{RemSL}
The labelling of the partition $p$ yields a not necessarily reduced word in $\ZZ_2^{*\infty}$ unless $p$ is in single leg form.
Furthermore, $w(p,l)$ only depends on the single leg  partition $p'$ associated to $p$, i.e. there is a labelling $l'$ such that $w(p,l)=w(p',l')$.  In other words, passing from $\CC$ to $\CC_{sl}$ corresponds exactly to the effect of reducing the words in $\ZZ_2^{*\infty}$ after the labelling. We thus have $F(\CC)=F(\CC_{sl})$.
\end{remark}

\begin{definition}
  Let $\cC$ be a group-theoretical category of partitions.  We denote by $F(\cC)$ the subset of $\ZZ_2^{* \infty}$ formed by all elements $w(p,l)$ where $p \in \cC$ and $l$ runs through all possible labellings of the points of $p$.
\end{definition}

Next, we study the structure of $F(\CC)$ for group-theoretical categories $\CC$. The category operations give rise to operations in  $\ZZ_2^{* \infty}$.

\begin{lemma}\label{LemFCGruppe}
 Let $\cC$ be a group-theoretical category of partitions. Then, $F(\cC) \subset \ZZ_2^{* \infty}$ is a normal subgroup of $\ZZ_2^{* \infty}$.
\end{lemma}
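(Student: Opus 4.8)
The strategy is to read off all three defining properties of a normal subgroup from simple manipulations of partitions. Throughout I will use that, by construction, $w(p,l)$ is the word obtained by reading the labels from left to right off the partition $\hat p \in P(0,k+m)$ got from $p$ by rotating all upper points down; since rotation is a category operation, $\hat p \in \cC$ and $w(\hat p, \hat l) = w(p,l)$ for the induced labelling $\hat l$. So it costs nothing to assume every element of $F(\cC)$ is realised by a partition in $P(0,\cdot)\cap\cC$. The extra input available because $\cC$ is group-theoretical is Lemma~\ref{LemBlockVerbinden}: any two blocks of a partition in $\cC$ may be merged without leaving $\cC$. Since $\paarpart$ lies in every category and $w(\paarpart,(a,a)) = a^2 = e$, we have $e\in F(\cC)$, so $F(\cC)\neq\emptyset$ and it remains to prove closure under inverses and products and normality.

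\emph{Inverses.} Given $g = w(p,l)\in F(\cC)$, realise it by $\hat p\in P(0,m)\cap\cC$ whose points read $s_1,\dots,s_m$ from left to right, so $g = s_1\cdots s_m$ as a word. Apply the involution to get $\hat p^{\,*}\in P(m,0)\cap\cC$ and then rotate its upper points down one at a time, each time moving the leftmost remaining upper point to the far left of the lower row. The resulting partition $r\in P(0,m)\cap\cC$ has its points labelled $s_m,s_{m-1},\dots,s_1$ from left to right, so $w(r,\cdot) = s_m\cdots s_1 = g^{-1}$, because every generator of $\ZZ_2^{*\infty}$ is its own inverse. Hence $g^{-1}\in F(\cC)$.

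\emph{Products.} Given $g = w(p,l_1)$ and $h = w(q,l_2)$, realise them by $\hat p,\hat q\in P(0,\cdot)\cap\cC$ with labellings using letter sets $S$ and $T$, and form $\hat p\ot\hat q\in P(0,\cdot)\cap\cC$. Its blocks are the disjoint union of those of $\hat p$ and those of $\hat q$; for each $c\in S\cap T$ merge the $c$-block of $\hat p$ with the $c$-block of $\hat q$ using Lemma~\ref{LemBlockVerbinden}. The result lies in $\cC$ and now admits the labelling that restricts to $l_1$ on the $\hat p$-part and to $l_2$ on the $\hat q$-part (this is a legitimate labelling precisely because the blocks forced to share a letter have been merged). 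Reading left to right yields the word $gh$, so $gh\in F(\cC)$. Together with the previous step this shows $F(\cC)$ is a subgroup.

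\emph{Normality.} It suffices to prove $bgb\in F(\cC)$ for every $g\in F(\cC)$ and every generator $b$, since conjugation by a word $x = b_1\cdots b_k$ is the iteration $g\mapsto b_k g b_k \mapsto b_{k-1}(b_k g b_k)b_{k-1}\mapsto\cdots$ and $b_i^{-1}=b_i$. Fix $g = w(p,l)$ and realise it by $\hat p\in P(0,m)\cap\cC$. Form $\paarpart\ot\hat p\in P(0,m+2)\cap\cC$ and rotate its leftmost point around to the far right: the outcome $p'\in P(0,m+2)\cap\cC$ carries a two-element block at its extreme left and extreme right, with the copy of $\hat p$ in between. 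If $b$ occurs in $\hat p$, merge this two-element block with the $b$-block of $\hat p$ via Lemma~\ref{LemBlockVerbinden} (otherwise leave it as is), and label so that this (possibly merged) block is $b$ and the remaining blocks follow the labelling of $\hat p$. The word read off is $bgb$, so $bgb\in F(\cC)$, which completes the proof.

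\emph{Main obstacle.} There is no deep step here; the only real care needed is the bookkeeping with the counterclockwise labelling convention and the rotation convention, so as to be certain that the three partitions constructed above genuinely read off as $g^{-1}$, $gh$ and $bgb$ — the inverse step, where an involution followed by rotations must produce a \emph{reversed} word, is the most error-prone. One should also note that reducedness of words never enters: $F(\cC)$ is a set of group elements, a non-reduced word still represents the element it should, and so the mismatch between the assignment $p\mapsto w(p,l)$ and reduction (equivalently, passage to single leg form, cf. Remark~\ref{RemSL}) is harmless throughout.
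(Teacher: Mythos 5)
Your proof is correct and follows essentially the same route as the paper's: inverses via involution plus rotation (reversing the word), products via tensor product plus merging of equally-labelled blocks using Lemma~\ref{LemBlockVerbinden}, and normality by wrapping an outer pair around the partition and reducing to conjugation by a single generator. The only differences are cosmetic (you build the conjugation partition by tensoring with $\paarpart$ and rotating rather than by composing with $\idpart\otimes p\otimes\idpart$, and you correctly note that reduction to single leg form is not needed here), so nothing further is required.
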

\begin{proof} 
By Remark \ref{RemSL}, we may always assume that elements $g=w(p,l)$ in $F(\cC)$ are represented by partitions $p\in\cC$ in single leg form with no upper points.

The set $F(\cC)$ is closed under taking inverses. Indeed, note that the inverse of a word in $\Z_2^{*\infty}$ is simply the reversed word, because $a_i^2=e$. Let $g=w(p,l)$ for some partition $p\in\cC$ in single leg form without upper points and some labelling $l$. Thus, $w(p,l)$ is given by labelling the partition $p$ from left to right. As $p$ is in $\CC$, the partition $\bar p$ obtained from vertical reflection is also in $\CC$. Indeed, simply rotate the points of $p^*$ (they are all on the upper line) to the lower line -- and we obtain $\bar p\in\CC$. Now, labelling $\bar p$ by the labelling $l$ from right to left (we denote this labelling by $\bar l$), we infer that the reversed word $g^{-1}=w(\bar p,\bar l)$ is in $F(\cC)$.

In order to prove that $F(\cC)$ is closed under taking products, let $g=w(p,l)$ and $h=w(q,l')$ for some partitions $p,q\in \cC$ in single leg form without upper points, and some labellings $l = (a_{i(1)}, \dotsc, a_{i(n)})$ and $l' = (a_{j(1)}, \dotsc a_{j(m)})$.  If all letters of $l$ and $l'$ are pairwise different, then $gh=w(p\otimes q,ll')$, where $ll'$ is the labelling $ll' = (a_{i(1)}, \dotsc, a_{i(n)},a_{j(1)}, \dotsc a_{j(m)})$. Otherwise, 
if $a_{i(\alpha)}=a_{j(\beta)}$ for some $\alpha$ and $\beta$, we connect the corresponding blocks of $p$ and $q$, using Lemma \ref{LemBlockVerbinden}. Iterating this procedure yields a  partition $r$ in $\cC$ and $gh=w(r,l'')$ with the labelling $l''$ obtained from $l$ and $l'$.

Finally, $F(\cC)$ is closed under conjugation with a letter $a_k$. Let $e \neq g = w(p,l)$ be an element in $F(\cC)$ such that $p$ is in single leg form with no upper points.
 If the letter $a_k$ does not appear in the labelling $l = (a_{i(1)}, \dotsc, a_{i(n)})$ of $p$, we consider the partition
\begin{center}
\begin{picture}(4,3)
  \put(0,0.75){$p' =$}
  \put(1,0){$\uppartii{2}{1}{4}$}
  \put(3.25,0.5){$p$}

  \put(5.25,0.25){,}
\end{picture}
\end{center}
i.e. the partition obtained from composing $\paarpart$ with $\idpart\otimes p\otimes\idpart$. Labelling this partition with $l'=(a_k,a_{i(1)}, \dotsc, a_{i(n)}, a_k)$ yields $a_k g a_k = w(p',l')$ in $F(\cC)$. In the case that the letter $a_k$ appears in the labelling $l$, we connect the corresponding block of $p$ with the outer pair, using Lemma \ref{LemBlockVerbinden}. The resulting partition $r$ yields $a_kga_k=w(r,l'')$ for some labelling $l''$.
\end{proof}

We  give a description of the image of $F$ in terms of subgroups of $\ZZ_2^{*\infty}$ that are invariant under certain endomorphisms. This is the content of Theorem \ref{thm:F-is-1-1}.  Let us prepare its formulation.

\begin{definition}
\label{def:endomorphisms-S_0}
 The \emph{strong symmetric semigroup} $sS_\infty$ is the subsemigroup of \linebreak $\End(\ZZ_2^{* \infty})$ generated by finite identifications of letters, i.e. for any $n \in \NN$ and any choice of indices $i(1), \dotsc, i(n)$ the maps
  \[\begin{cases}
     a_k \mapsto a_{i(k)}       & 1 \leq k \leq n  \\
     a_k \mapsto a_k           & k > n 
     \end{cases}
  \]
are in $sS_\infty$.
\end{definition}

We give a refinement of Lemma \ref{LemFCGruppe}.

\begin{lemma}
\label{lem:F-defines-invariant-group}  
If $\cC$ is a group-theoretical category of partitions, then $F(\cC)$ is an $sS_\infty$-invariant normal subgroup of $\ZZ_2^{*\infty}$.  
So $F$ is a well-defined map from group-theoretical categories of partitions to $sS_\infty$-invariant normal subgroups of $\ZZ_2^{*\infty}$.  Moreover, $F$ preserves inclusions and  hence it is a lattice homomorphism.
\end{lemma}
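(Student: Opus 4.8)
The plan is to leverage Lemma \ref{LemFCGruppe}, which already establishes that $F(\cC)$ is a normal subgroup, so that the only genuinely new content here is the $sS_\infty$-invariance and the preservation of inclusions. Since $sS_\infty$ is generated as a semigroup by the finite letter-identification maps described in Definition \ref{def:endomorphisms-S_0}, and since the intersection/composition of invariances is again an invariance, it suffices to check that $F(\cC)$ is invariant under a single such generator $\sigma\colon a_k\mapsto a_{i(k)}$ (for $1\le k\le n$) and $a_k\mapsto a_k$ for $k>n$.

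First I would take an element $g=w(p,l)\in F(\cC)$; by Remark \ref{RemSL} I may assume $p\in\cC_{sl}$ has no upper points, with labelling $l=(a_{j(1)},\dotsc,a_{j(m)})$. Applying $\sigma$ to $g$ produces a (possibly non-reduced) word $\sigma(g)$, obtained by relabelling the points of $p$ according to $a_{j}\mapsto\sigma(a_j)$. The effect of this relabelling is precisely that certain blocks of $p$ that carried distinct letters now carry the same letter. Using Lemma \ref{LemBlockVerbinden}, I can connect exactly those blocks of $p$ inside $\cC$, obtaining a partition $p''\in\cC$ whose blocks are in bijection with the letters actually occurring in $\sigma(g)$; choosing the induced labelling $l''$ gives $w(p'',l'')=\sigma(g)$ (the reduction of the word happening automatically when we pass to the single leg version of $p''$, cf.\ Remark \ref{RemSL}). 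Hence $\sigma(g)\in F(\cC)$, and $F(\cC)$ is $sS_\infty$-invariant. Combined with Lemma \ref{LemFCGruppe} this shows $F$ lands in the set of $sS_\infty$-invariant normal subgroups, so $F$ is well-defined as claimed.

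For monotonicity, if $\cC\subseteq\cD$ are both group-theoretical, then every partition representing an element of $F(\cC)$ also lies in $\cD$, so directly from the definition of $F$ we get $F(\cC)\subseteq F(\cD)$. Since the group-theoretical categories of partitions form a lattice under inclusion (meet being intersection, join being the generated category) and likewise the $sS_\infty$-invariant normal subgroups of $\ZZ_2^{*\infty}$ form a lattice, an inclusion-preserving map between them is in particular a lattice homomorphism in the weak sense that it preserves the order; this is exactly the assertion in the statement, so nothing further is needed.

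The main obstacle is the bookkeeping in the invariance step: one must be careful that after identifying letters via $\sigma$, applying Lemma \ref{LemBlockVerbinden} really reproduces the \emph{reduced} word $\sigma(g)$ rather than some unreduced relabelling, i.e.\ that the combinatorial operation ``merge blocks'' on the partition side corresponds under $w(\cdot,\cdot)$ to the algebraic operation ``substitute and reduce'' on the group side. This is handled cleanly by invoking Remark \ref{RemSL} (which says $w(p,l)$ depends only on the single leg version of $p$), so that one merges the prescribed blocks, then passes to the single leg version, and the word one reads off is automatically reduced and equal to $\sigma(g)$. With that observation in place the argument is routine.
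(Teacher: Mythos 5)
Your proposal is correct and follows essentially the same route as the paper: normality is delegated to Lemma \ref{LemFCGruppe}, $sS_\infty$-invariance is checked on the generating letter-identifications by relabelling (for fresh letters) and by merging blocks via Lemma \ref{LemBlockVerbinden} (for repeated letters), and monotonicity is read off directly from the definition of $F$. Your extra care about reduction via Remark \ref{RemSL} is a sound elaboration of a point the paper leaves implicit, not a different argument.
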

\begin{proof}
Using Lemma \ref{LemFCGruppe} it remains to show that $F(\cC)$ is invariant under finite identification of letters.  Let $g = w(p,l)$ be an element in $F(\cC)$ constructed from a partition $p \in \cC$.  It is clear that we can change a letter in $g$, if the new letter did not appear in $g$ before -- this simply corresponds to $w(p,l')$ with a different labelling $l'$.  If the new letter already appeared in $g$, we connect two blocks of $p$ using Lemma~\ref{LemBlockVerbinden}.  
\end{proof}

The preceding lemma specifies that we can associate an $sS_\infty$-invariant normal subgroup $F(\cC)$ of $\ZZ_2^{*\infty}$ to any group-theoretical category of partitions $\cC$ -- but we can also go back. In fact, \emph{every} $sS_\infty$-invariant normal subgroup of $\ZZ_2^{*\infty}$ comes from such a category. 
This is worked out in the sequel.

\begin{lemma}
For any $sS_\infty$-invariant normal subgroup $H$ of $\ZZ_2^{*\infty}$, the set 
\[\cC_H:=w^{-1} ( H)=\{p\in P \;|\; \textnormal{there is a labelling $l$ such that } w(p,l)\in H\}\subset P\]
 is a group-theoretical category of partitions.
\end{lemma}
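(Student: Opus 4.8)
The plan is to verify the category axioms for $\cC_H$ one by one, translating each axiom into the corresponding operation on words in $\ZZ_2^{*\infty}$ and using the three defining properties of $H$: being a subgroup, being normal, and being $sS_\infty$-invariant. First I would observe that $\cC_H$ is ``closed under single leg versions'' in both directions: by Remark~\ref{RemSL}, $w(p,l)$ depends only on the single leg version $p'$ of $p$ (with a correspondingly induced labelling), so $p\in\cC_H$ if and only if $p'\in\cC_H$. This lets me reduce every verification to partitions in single leg form without upper points, exactly as in the proof of Lemma~\ref{LemFCGruppe}. I would also note at the outset that $\idpart\in\cC_H$ and $\paarpart\in\cC_H$: the identity partition and the pair partition have single leg version $\emptyset$, whose word is the neutral element $e\in H$. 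Likewise $\primarypart\in\cC_H$, since its single leg version is also $\emptyset$ (it is a rotated version of $k_1$ and all its blocks have even size after rotation to one row), so once $\cC_H$ is shown to be a category it is automatically group-theoretical.

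Next I would handle the category operations. For \emph{rotation}, the construction of $w(p,l)$ already rotates all upper points to the lower row and, for $p\in P(0,l)$, rotating the leftmost point to the rightmost position conjugates the associated reduced word by that point's letter (or cyclically permutes it), which stays in $H$ by normality; so $\cC_H$ is rotation-invariant. For \emph{involution}, reflecting $p$ vertically and rotating back reverses the word, and the reverse of $w$ is $w^{-1}$ because every generator is an involution; hence closure under $*$ follows from $H$ being closed under inverses, exactly the ``inverse'' argument in the proof of Lemma~\ref{LemFCGruppe}. For the \emph{tensor product} of $p,q\in\cC_H$ in single leg form without upper points: if their labellings use disjoint letters then $w(p\otimes q,\,ll')=w(p,l)w(q,l')\in H$ since $H$ is a subgroup; if the labellings share letters, first apply an element of $sS_\infty$ to the labelling of $q$ renaming its letters to fresh ones (the resulting word is in $H$ by $sS_\infty$-invariance, and it still comes from $q$ since renaming blocks is just relabelling), reducing to the disjoint case, and then observe that the partition $p\otimes q$ with the original (overlapping) labelling is obtained from $p\otimes q'$ by connecting the identified blocks — which, on the word side, is again an $sS_\infty$ identification of letters, keeping us inside $H$. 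The \emph{composition} $qp$ for $p\in P(k,l)$, $q\in P(l,m)$ is the most delicate: I would first rotate so that $p,q$ have no upper points, realize $qp$ (after rotation) as built from $p\otimes \bar q$ by applying pair partitions $\baarpart$ that glue the $l$ matched points pairwise and then deleting the resulting removable blocks; gluing a point of $p$'s labelling to the corresponding point of $\bar q$'s labelling is an $sS_\infty$ identification, and deleting a block of even size with no other points is precisely the reduction of Lemma~\ref{LemReduzieren}, which does not change the word. Tracking the letters carefully, the word of $qp$ equals the word of $p\otimes\bar q$ after these identifications and reductions, hence lies in $H$.

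The step I expect to be the main obstacle is the composition, because there one must be careful that the ``middle'' points of $p$ and $q$ get identified in matching pairs, that blocks touching several middle points get correctly merged, and that blocks entirely among the removed middle points disappear without affecting the word — in general the naive word for $qp$ is only obtained after a nontrivial sequence of letter-identifications and of even-block removals, and one has to check that each such step is covered by $sS_\infty$-invariance respectively by Lemma~\ref{LemReduzieren}. A clean way to organize this is to prove once and for all a small lemma: if $p\in\cC_H$ and $p''$ is obtained from $p$ by either (i) identifying two blocks, or (ii) adding or removing a block of size two not connected to anything else, or (iii) rotation/reflection, then $p''\in\cC_H$; all three are immediate from the three properties of $H$ together with Remark~\ref{RemSL}. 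Since every category operation decomposes into finitely many moves of types (i)--(iii) applied to tensor products of partitions already in $\cC_H$ (and the tensor product with disjoint letters is handled directly by the subgroup property), the verification then reduces to bookkeeping. Finally I would record that $\idpart\in\cC_H$, completing the proof that $\cC_H$ is a (group-theoretical) category of partitions.
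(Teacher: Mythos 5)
Your proposal is correct and follows essentially the same route as the paper's proof: the generating partitions map to $e$, involution corresponds to word reversal (hence inverses), tensor products to concatenation (the subgroup property), rotation to conjugation (normality), and composition is reduced to gluing points with $\baarpart$, i.e.\ to letter identifications covered by $sS_\infty$-invariance. The only organizational difference is that the paper handles the tensor product of partitions \emph{with} upper points directly via the identity $w(p\otimes q,l'')=g_1hg_2=g_1hg_1^{-1}g$, whereas you absorb this step into your preliminary reduction to one-row partitions together with rotation-closure; both arguments rest on the same normality of $H$.
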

\begin{proof}
The pair partition $\sqcap$, the unit partition $\idpart$, the four block partition $\vierpart$, and the pair positioner partition $\primarypart$ are all in $\cC_H$, since they are mapped to the neutral element $e\in H$ for any labelling $l$. 

Let $p$ and $q$ be partitions in $\cC_H$ and denote by $g:=w(p,l)$ and $h:=w(q,l')$ some corresponding elements in $H$ for some labellings $l$ and $l'$. Since $H$ is invariant under permutation of letters, we can assume that the labellings $l$ and $l'$ are chosen in such a way that $g$ and $h$ do not share any letter. If $p$ has no upper points, then $gh=w(p\otimes q,ll')$ is in $H$ and hence $p\otimes q\in \CC_H$. Otherwise, we may write the element $g$ as $g=g_1g_2$, where $g_1$ corresponds to the labelling of the lower points of $p$, and $g_2$ to the upper points of $p$.
The labelling $l''$ of $p\otimes q$ (where $p$ is labelled by $l$ and $q$ by $l'$) is hence of the following form:

\begin{center}
\setlength{\unitlength}{0.5cm}
\begin{picture}(10,6)
 \put(0,2.2){$p\otimes q =$}
 \put(3,1.5){\line(1,0){3}}
 \put(4,0.2){$g_1$}
 \put(4,.8){$\rightarrow$}
 \put(7,1.5){\line(1,0){3}}
 \put(8,0.2){$h_1$}
 \put(8,.8){$\rightarrow$}
 \put(3,3.5){\line(1,0){3}}
 \put(4,4.5){$g_2$}
 \put(4,3.8){$\leftarrow$}
 \put(7,3.5){\line(1,0){3}}
 \put(8,4.5){$h_2$}
 \put(8,3.8){$\leftarrow$}
 \put(4,2.2){$p$}
 \put(8,2.2){$q$}
\end{picture}
\end{center}

As $H$ is closed under conjugation, the element $g_1hg_1^{-1}$ is in $H$. We infer that $w(p\otimes q,l'')=g_1hg_2=g_1hg_1^{-1}g$ is in $H$. Hence, $p\otimes q\in \cC_H$, and $\cC_H$ is closed under tensor products.

The set $\cC_H$ is also closed under involution, since for $p\in \cC_H$ with $w(p,l)=g\in H$, we have $w(p^*,l^*)=g^{-1}\in H$, where $l^*$ denotes the labelling $l$ in reverse order.  It is also closed under rotation, since a partition obtained from another by moving points (from above to below or the converse) at the right hand side yields the same labelled word. If $p'$ is obtained from $p$ by moving the upper left point to below, the element $w(p',l)$ coincides with $aw(p,l)a$, where $a$ is the last letter of the non-reduced word $w(p,l)$. Likewise for rotation of a lower left point to the upper row of points.

It remains to show that $\cC_H$ is closed under the composition of partitions. We first show that $\cC_H$ is closed under composition with a partition of the form:

\begin{center}
\setlength{\unitlength}{0.5cm}
\begin{picture}(10,2)
  \put(0,0){\line(0,1){2}}
  \put(0.8,0.8){$\dotsm$}
  \put(2,0){\line(0,1){2}}
  \put(3,1){\line(0,1){1}}
  \put(4,1){\line(0,1){1}}
  \put(3,1){\line(1,0){1}}
  \put(5,0){\line(0,1){2}}
  \put(5.8,0.8){$\dotsm$}
  \put(7,0){\line(0,1){2}}
\end{picture}
\end{center}

Let $p\in\cC_H$ be a partition on $k$ upper points and $m$ lower points and consider the partition $\idpart^{\otimes i-1}\otimes\baarpart\otimes\idpart^{\otimes m-i-1}$ on $m$ upper points and $m-2$ lower points, where $\sqcup$ connects the $i$-th and the $(i + 1)$-st point. Denote their composition by $p'\in P(k,m-2)$. There is a labelling $l$ such that $g:=w(p,l)$ is in $H$.  For a suitable labelling $l'$, the element $w(p',l')$ arises from $g$ by identifying the $i$-th and the $(i+1)$-st letter.  Since $H$ is invariant under this operation, the partition $p'$ is in $\cC_H$.

It remains to reduce the composition of arbitrary partitions to the previous case. Let $p\in\cC_H$ be a partition on $k$ upper and $l$ lower points, and let $q\in \cC_H$ be on $l$ upper and $m$ lower points. Write $p'$ and $q'$ for the partitions arising from $p$ and $q$, respectively, by rotating their upper points to the right of the lower points. Then $p'$ and $q'$ are both in $\cC_H$.  Composing $q'\otimes p'$ with the partition $\idpart^{\otimes m}\otimes s\otimes \idpart^{\otimes k}$, where $s$ is the partition obtained from $l$ pairs nested into each other,
yields a partition $p''\in\cC_H$ on $k+m$ points. Rotating $k$ points on the right of $p''$ to the upper line gives the composition $qp$ of $p$ and $q$, which hence is in $\cC_H$. Note that in the following picture, the partitions $p'$ and $q'$ consist only of upper points -- and hence the lines connecting the points are above the points.

\begin{center}
\setlength{\unitlength}{0.5cm}
\begin{picture}(30,8)
 \put(0,7){\line(1,0){5}}
 \put(5.5,6.7){$k$} 
 \put(1,5.5){$p$}
 \put(0,4){\line(1,0){3}}
 \put(3.5,3.7){$l$}
 \put(1,2.5){$q$}
 \put(0,1){\line(1,0){4}}
 \put(4.5,0.7){$m$}

 \put(8,4){$\leadsto$}

 \put(11,5){\line(1,0){4}}
 \put(12.5,5.2){$m$}
 \put(16,5){\line(1,0){3}}
 \put(17.5,5.2){$l$}
 \put(20,5){\line(1,0){3}}
 \put(21.5,5.2){$l$}
 \put(24,5){\line(1,0){5}}
 \put(26.5,5.2){$k$}
 \put(15,6.5){$q'$}
 \put(23,6.5){$p'$}

 \put(11,0){\line(0,1){4}}
 \put(12.7,2){$\dotsm$}
 \put(15,0){\line(0,1){4}}
 \put(14.8,5){\partii{3}{1}{8}}
 \put(14.8,5){\partii{2}{2}{7}}
 \put(14.8,5){\partii{1}{3}{6}}
 \put(19.3,3.3){$\dotsm$}
 \put(24,0){\line(0,1){4}}
 \put(26.2,2){$\dotsm$}
 \put(29,0){\line(0,1){4}}
\end{picture}
\end{center}

We conclude that $\cC_H$ is closed under the category operations, hence it is a category of partitions, containing $\primarypart$.
\end{proof}

We show now that the map $H \mapsto \cC_H$ is the inverse of $F$.

\begin{theorem}
\label{thm:F-is-1-1}
The maps $F$ and $H \mapsto \cC_H$ are inverse to each other. Hence, the map $F$ is a bijective lattice homomorphism  from group-theoretical categories of partitions onto $sS_\infty$-invariant normal subgroups of $\ZZ_2^{*\infty}$. 
\end{theorem}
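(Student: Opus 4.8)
The plan is to establish the two composition identities $F(\cC_H) = H$ and $\cC_{F(\cC)} = \cC$, since all other assertions (bijectivity, lattice homomorphism property, inclusion-preservation) were already obtained in Lemma~\ref{lem:F-defines-invariant-group} and in the lemma constructing $\cC_H$. Throughout I would exploit Proposition~\ref{LemmaX} and Corollary~\ref{PropCSL} to reduce every verification to partitions in single leg form, and Remark~\ref{RemSL} to pass freely between a category and its single leg part on the group side (recall $F(\cC) = F(\cC_{sl})$).

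\textbf{Step 1: $F(\cC_H) = H$.} For the inclusion $F(\cC_H) \subseteq H$: if $g = w(p,l)$ with $p \in \cC_H$, then by definition of $\cC_H$ there is \emph{some} labelling $l_0$ with $w(p,l_0) \in H$. I must argue that $w(p,l)$ for the \emph{given} labelling also lies in $H$; but any two labellings of the same partition differ by an identification/relabelling of letters which, since single leg form may be lost, can be absorbed using that $H$ is $sS_\infty$-invariant and normal (the same move as in the proof of Lemma~\ref{lem:F-defines-invariant-group}). Hence $w(p,l) \in H$. For the reverse inclusion $H \subseteq F(\cC_H)$: given $h \in H$, write $h = a_{i(1)} a_{i(2)} \cdots a_{i(n)}$ as a reduced word; this word \emph{is} a partition $p$ in single leg form (connect equal letters), and $h = w(p, l)$ for the tautological labelling $l$. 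Since $w(p,l) = h \in H$, we get $p \in \cC_H$ and therefore $h \in F(\cC_H)$.

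\textbf{Step 2: $\cC_{F(\cC)} = \cC$.} The inclusion $\cC \subseteq \cC_{F(\cC)}$ is immediate: for $p \in \cC$ and any labelling $l$, the word $w(p,l)$ lies in $F(\cC)$ by definition, so $p \in \cC_{F(\cC)}$. For the reverse inclusion $\cC_{F(\cC)} \subseteq \cC$, take $p \in \cC_{F(\cC)}$; then some labelling gives $w(p,l) \in F(\cC)$. By Remark~\ref{RemSL}, $w(p,l) = w(p', l')$ for the single leg version $p'$ of $p$, and $w(p',l') \in F(\cC) = F(\cC_{sl})$ means there exists $q \in \cC_{sl}$ and a labelling $l''$ with $w(q, l'') = w(p', l')$. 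Since both $p'$ and $q$ are in single leg form and reduced words have unique spellings in $\ZZ_2^{*\infty}$, the partitions $p'$ and $q$ must be equal (two single leg partitions with the same labelled reduced word are literally the same diagram). Hence $p' = q \in \cC_{sl} \subseteq \cC$, and then $p \in \cC$ by Proposition~\ref{LemmaX}. This proves $\cC_{F(\cC)} = \cC$.

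\textbf{Conclusion.} The two steps show $H \mapsto \cC_H$ and $F$ are mutually inverse bijections. Combined with Lemma~\ref{lem:F-defines-invariant-group} (which gives that $F$ is well-defined, lands in $sS_\infty$-invariant normal subgroups, and preserves inclusions) and the lemma showing each $\cC_H$ is a group-theoretical category, we conclude $F$ is an inclusion-preserving bijection with inclusion-preserving inverse, i.e. a lattice isomorphism. The step I expect to be the main obstacle is the uniqueness argument in Step~2: one must be careful that ``the partition determined by a reduced word together with a labelling is unique'' genuinely holds --- i.e., that for partitions in single leg form the map $p \mapsto (\text{underlying reduced word with its letters})$ is injective --- and that the labelling bookkeeping when comparing $w(p,l)$ across different labellings in Step~1 really is covered by $sS_\infty$-invariance plus normality rather than requiring something stronger.
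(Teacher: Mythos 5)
Your proposal is correct and follows essentially the same route as the paper's proof: both directions of $F(\cC_H)=H$ and $\cC_{F(\cC)}=\cC$ are handled exactly as in the paper, including the use of $sS_\infty$-invariance to compare different labellings of the same partition and the key uniqueness argument that two single leg partitions yielding the same reduced word in $\ZZ_2^{*\infty}$ must coincide, followed by Proposition~\ref{LemmaX}. The subtleties you flag at the end are precisely the points the paper's proof relies on, and they do hold.
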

\begin{proof}
Firstly, let $H$ be an $sS_\infty$-invariant normal subgroup of $\ZZ_2^{*\infty}$, and let $x \in H$.  Denote by $p$ the partition connecting the letters of the word $x$ if and only if they coincide, and let $l$ be the labelling such that $w(p,l) = x$.  Thus, $p \in \cC_H$ and hence $x \in F(\cC_H)$. (Recall that $x\in F(\cC)$ if and only if $x=w(p,l)$ for some $p\in \cC$ and some labelling $l$.)  Conversely, let $x = w(p,l) \in F(\cC_H)$ where $p \in \cC_H$. By definition, there is a labelling $l'$ such that $w(p,l')\in H$. Now, $H$ is invariant under permutation of letters, thus $x=w(p,l)=w(p,l')\in H$. We deduce that $H = F(\cC_H)$.

Secondly, let $\cC$ be a group-theoretical category of partitions, and let $p \in \cC$. Then $w(p,l) \in F(\cC)$ for any labelling $l$, and hence $p \in \cC_{F(\cC)}$. On the other hand, for $p \in \cC_{F(\cC)}$ there is a labelling $l$ such that $w(p,l) \in F(\cC)$. Thus, $w(p,l) = w(q,l')$ for some partition $q \in \cC$ and some labelling $l'$. 
By Remark \ref{RemSL}, we have $w(p,l)=w(\tilde p,\tilde l)$ where $\tilde p$ is the single leg version of $p$. Likewise  $w(q,l')=w(\tilde q,\tilde l')$. Now, $w(\tilde p,\tilde l)=w(\tilde q,\tilde l')$ implies that the partitions $\tilde p$ and $\tilde q$ coincide, since they are in single leg form (the labellings by $\tilde l$ and $\tilde l'$ yield reduced words, respectively). Thus, $p\in\CC$ by Lemma \ref{LemmaX}.  
This finishes the proof of $\cC = \cC_{F(\cC)}$.
\end{proof}

We conclude that there is a one-to-one correspondence between group-theoretical categories of partitions and $sS_\infty$-invariant normal subgroups of $\Z_2^{*\infty}$. 
The map $F$ hence transfers the problem of classifying all group-theoretical categories of partitions to a problem in group theory.
In \cite{RaumWeberSemiDirect} quotients by these subgroups play an important role as well as the following finite versions of $F$. Note that in \cite{RaumWeberSemiDirect} the fact that $F$ is injective is derived by different methods, namely as a consequence of algebraic considerations.

If $n\in \N$ is a natural number and $\CC$ is a group-theoretical category of partitions, we denote by $F_n(\CC)$ the subset of $\Z_2^{*n}$ formed by elements $w(p,l)$ where $p\in\CC$ and $l$ runs through all possible labellings of $p$ with letters $a_1,\ldots,a_n$. For technical reasons, we only consider partitions in $\CC$ with at most $n$ blocks such that all blocks are labelled by mutually different letters. Owing to Lemma \ref{LemBlockVerbinden}, we could equivalently label all points with any letters from $a_1,\ldots,a_n$ without taking into account that different blocks shall be labelled by different letters. Thus, $F_n(\CC)$ does not contain full information about $\CC$ since $F_n$ is not injective -- but $F_\infty(\CC):=F(\CC)$ does. 

\begin{definition}
Let $sS_n$ be the subsemigroup of $\textnormal{End}(\Z_2^{*n})$ generated by identifications of letters.
A \emph{strongly symmetric reflection group} $G$ is the quotient of $\Z_2^{*n}$ by an $sS_n$-invariant normal subgroup of $\Z_2^{*n}$.
\end{definition}

\begin{example}\label{ExFnC}
 We give examples of correspondences under the map $F_n$, for\linebreak  $n\in\N\cup\{\infty\}$.  
 \begin{itemize}
 \item[(a)] If $\CC=\langle\primarypart\rangle$, then $F_n(\CC)=\{e\}\subset\Z_2^{*n}$, since $F_n(\CC)$ is generated by $w(\primarypart,l)=e$. Hence, the strongly symmetric reflection group $\ZZ_2^{*\infty}/F_n(\CC)$ associated to \linebreak$\langle\primarypart\rangle$  is $\Z_2^{*n}$.
 \item[(b)] The subgroup $F_n(\CC)\subset\Z_2^{*n}$ for $\CC=\langle\halflibpart,\vierpart\rangle$ is generated by all elements $a_{i(1)}a_{i(2)}a_{i(3)}a_{i(1)}a_{i(2)}a_{i(3)}$. The corresponding strongly symmetric reflection group is the quotient of $\Z_2^{*n}$ by the relations $abc=cba$ for all $a,b,c$.
 \item[(c)] The category $\langle h_s,\vierpart\rangle$ for $s\geq 2$ corresponds to the strongly symmetric reflection group in which all elements satisfy $(ab)^s=e$. Note that for $s=2$ this group is abelian, namely it is $\Z_2^{\oplus n}$. (We have  $\langle h_2,\vierpart\rangle=\langle\crosspart,\vierpart\rangle$.)
 \item[(d)] The reflection group corresponding to the category $\langle \halflibpart, h_s, \vierpart\rangle$, $s\geq 3$ is the quotient of $\ZZ_2^{*n}$ by the relations $(ab)^s=e$ and $abc=cba$.  
 \item[(e)] The category  $\langle\crosspart,\singleton\otimes\singleton,\vierpart\rangle$ 
 corresponds to the reflection group $\Z_2$, since $F_n(\CC)$ is generated by $a_ia_j=w(\singleton\otimes\singleton,(a_i,a_j))$. 
 \item[(f)] The category  $\langle\crosspart,\singleton,\vierpart\rangle$ corresponds to the trivial reflection group $\{e\}$, since $F_n(\CC)$ contains all $a_i=w(\singleton,(a_i))$. 
 \end{itemize}
\end{example}

\section{Consequences for easy quantum groups}\label{SectConsequences}

Woronowicz defined the notion of a \emph{compact matrix quantum group} in \cite{Woronowicz87}, see also \cite{Woronowicz91}. For $n\in\N$, it is given by
 \begin{itemize}
  \item a unital $C^*$-algebra $A$,
  \item elements $u_{ij}\in A$, $1\leq i,j\leq n$ that generate $A$ as a $C^*$-algebra,
  \item and a *-homomorphism $\Delta: A\to A\otimes_{\mini} A$, mapping $u_{ij}\mapsto \sum_{k=1}^n u_{ik}\otimes u_{kj}$.
  \item Furthermore, we require the $n\times n$-matrices $u=(u_{ij})$ and $u^t=(u_{ji})$ to be invertible.
 \end{itemize}
We also write $C(G)$ for the C$^*$-algebra $A$ and refer to $G$ as the quantum group.
In \cite{Woronowicz88} Woronowicz proved a Tannaka-Krein duality for compact matrix quantum groups, stating that they are determined by their intertwiner spaces.
These are given by:
\[\Hom(k,l)=\{T:(\C^n)^{\otimes k}\to (\C^n)^{\otimes l} \textnormal{ linear}\;|\; Tu^{\otimes k}=u^{\otimes l}T\}, \qquad k,l\in\N_0\]

Banica and Speicher introduced \cite{BanicaSpeicher09} a particular class of compact matrix quantum groups whose intertwiner spaces are given by the combinatorics of partitions.
Given a partition $p \in P(k,l)$ and two multi-indices $(i_1, \dotsc, i_k)$, $(j_1, \dotsc, j_l)$, we label the diagram of $p$ with these numbers (now, the upper and the lower row both are labelled from left to right) and we put:
\[\delta_p(i,j)
  =
  \begin{cases}
    1 & \text{if } p \text{ connects only equal indices,} \\
    0 & \text{otherwise} 
  \end{cases}\]
For every $n \in \NN$, there is a map $T_p: (\C^n)^{\ot k} \to (\C^n)^{\ot l}$  associated with $p$, which is given by:
\[T_p(e_{i_1} \ot \dotsm \ot e_{i_k}) = \sum_{1 \leq j_1, \dotsc, j_l \leq n} \delta_p(i, j) e_{j_1} \ot \dotsm \ot e_{j_l}  \]

\begin{definition}[Definition 6.1 of \cite{BanicaSpeicher09} or Definition 2.1 of \cite{BanicaCurranSpeicher10}]
  A compact matrix quantum group $S_n\subset G\subset O_n^+$ is called \emph{easy}, if there is a category of partitions $\cC$ given by $\CC(k,l) \subset P(k,l)$, for all $k,l \in \NN_0$ such that:
\[\Hom( k, l) = \textnormal{span} \{T_p \;|\; p \in \CC(k,l) \} \]
\end{definition}
In this sense, easy quantum groups could also be called \emph{partition quantum groups}.

Based on Woronowicz' Tannaka-Krein duality we obtain the following theorem, which is the basis of all combinatorial investigation on easy quantum groups.

\begin{theorem}[\cite{BanicaSpeicher09}]
\label{thm:classification-easy-quantum-groups-by-partitions}
  There is a bijection between categories of partitions and series of easy quantum groups (i.e. a category $\CC$ corresponds to a series consisting of quantum groups $G_n$ for all dimensions $n\in\N$) up to similarity.
\end{theorem}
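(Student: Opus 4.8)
The plan is to invoke Woronowicz's Tannaka--Krein duality \cite{Woronowicz88} as a black box and feed it the concrete tensor category assembled from the linear maps $T_p$; the rest is essentially bookkeeping together with one linear-algebra input.

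\emph{From a category to a series of quantum groups.} Fix $n\in\N$ and a category of partitions $\CC$. First I would establish the compatibility of $p\mapsto T_p$ with the category operations by direct index computations from the definition of $\delta_p$: $T_{p\ot q}=T_p\ot T_q$, $T_{p^{*}}=(T_p)^{*}$, and $T_qT_p=n^{c(q,p)}T_{qp}$ for composable $p,q$, where $c(q,p)\ge 0$ counts the closed loops deleted in the vertical concatenation. Since a linear span is unaffected by rescaling, the collection $R_\CC:=\big(\Span\{T_p:p\in\CC(k,l)\}\big)_{k,l}$ is stable under composition, tensor product and adjoints, contains the identities $T_{\idpart^{\ot k}}=\id_{(\C^n)^{\ot k}}$, and -- because $\paarpart$ and its rotation $\paarpart^{*}$ lie in $\CC$ -- contains $T_{\paarpart}\colon\C\to\C^n\ot\C^n$ together with $T_{\paarpart}^{*}$, which satisfy the conjugate (zig-zag) equations expressing that the fundamental corepresentation is self-conjugate and invertible with invertible transpose. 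Hence $R_\CC$ meets the hypotheses of Woronowicz's reconstruction theorem, which returns a compact matrix quantum group $G_n=G_n(\CC)$ with $n$-dimensional fundamental representation and $\Hom_{G_n}(k,l)=\Span\{T_p:p\in\CC(k,l)\}$ for all $k,l$.

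\emph{The sandwich and surjectivity.} Next I would use that $NC_2$ is the smallest category of partitions (it is generated by $\idpart$ alone) and $P$ the largest, so $NC_2\subseteq\CC\subseteq P$ always. Under the order-reversing passage from intertwiner spaces to quantum subgroups this gives $S_n\subseteq G_n(\CC)\subseteq O_n^+$, since $O_n^+$ has category $NC_2$ and $S_n$ has category $P$. So every $\CC$ produces a genuine series $(G_n(\CC))_{n\in\N}$ of easy quantum groups, and surjectivity of $\CC\mapsto(G_n(\CC))_n$ onto series of easy quantum groups is immediate from the definition of easiness.

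\emph{Injectivity and the main obstacle.} For injectivity the crucial input is that $\{T_p:p\in P(k,l)\}$ is linearly independent once $n$ is large enough (say $n\ge k+l$) -- a standard computation, e.g.\ via invertibility of the Gram matrix $\big(\langle T_p,T_q\rangle\big)_{p,q}$ for large $n$; this is precisely why one passes to the whole series rather than to a single $G_n$. Granting it, if $\CC\ne\CC'$ pick, say, $p\in\CC(k,l)\setminus\CC'(k,l)$ and take $n\ge k+l$: then $T_p\notin\Span\{T_q:q\in P(k,l),\,q\ne p\}\supseteq\Span\{T_q:q\in\CC'(k,l)\}$, so $\Hom_{G_n(\CC)}(k,l)\ne\Hom_{G_n(\CC')}(k,l)$ and the two series are genuinely different, in particular not similar. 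I expect the main obstacle to be the first step: verifying precisely that $R_\CC$ satisfies Woronowicz's axioms, in particular pinning down the loop factor $n^{c(q,p)}$ in the composition rule and checking the conjugate equations that encode the invertibility of $u$ and $u^{t}$. The sandwich and the injectivity argument are then comparatively routine.
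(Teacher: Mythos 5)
The paper offers no proof of this statement: it is quoted from \cite{BanicaSpeicher09} (note the citation in the theorem header), so there is no in-paper argument to compare yours against. Your proposal reconstructs, correctly and in the right order, the standard Banica--Speicher argument: the compatibility relations $T_{p\ot q}=T_p\ot T_q$, $T_{p^*}=T_p^*$ and $T_qT_p=n^{c(q,p)}T_{qp}$; the conjugate equations for $T_{\paarpart}$ guaranteeing invertibility of $u$ and $u^t$; Woronowicz's reconstruction \cite{Woronowicz88}; the sandwich $NC_2\subseteq\CC\subseteq P$ giving $S_n\subseteq G_n(\CC)\subseteq O_n^+$; and linear independence of $\{T_p : p\in P(k,l)\}$ for $n\geq k+l$ to get injectivity. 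Two points deserve more care than you give them. First, the linear independence is itself a nontrivial theorem (invertibility of the Gram matrix $\bigl(n^{|p\vee q|}\bigr)_{p,q}$ for $n\geq k+l$, where $p\vee q$ denotes the join of the two partitions of $k+l$ points); it is exactly why the bijection is with \emph{series} rather than with a single quantum group, as you note, but it should be proved or precisely referenced rather than waved at. Second, your final step ``the Hom-spaces differ, hence the series are not similar'' is a little quick: a similarity conjugates intertwiner spaces, and two distinct subspaces of equal dimension could a priori be conjugate. The standard resolution is that an easy quantum group is by definition given together with its concrete fundamental representation satisfying $\Hom(k,l)=\Span\{T_p : p\in\CC(k,l)\}$, so equality of series means equality of these concrete spans; then your linear-independence argument recovers $\CC(k,l)$ from $\Hom(k,l)$ for $n\geq k+l$ and closes the proof.
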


Thus, easy quantum groups are completely determined by their categories of partitions. We refer to \cite{BanicaSpeicher09}, \cite{BanicaCurranSpeicher10}, \cite{Weber13} or \cite{RaumWeberVolleKlassif} for more details on easy quantum groups.
We say that an easy quantum group is \emph{group-theoretical} if it corresponds to a group-theoretical category of partitions.
Note that the maximal group-theoretical easy quantum group $H_n^{[\infty]}$ corresponds to the category $\langle\primarypart\rangle$, where $H_n^{[s]}$ is the hyperoctahedral series as defined in \cite{BanicaCurranSpeicher10} (see also Remark \ref{RemKL}).

In \cite{RaumWeberSemiDirect}, we develop another approach to group-theoretical easy quantum groups. 
The main theorem there is the following.

\begin{theorem}[{\cite[Theorem 3.1]{RaumWeberSemiDirect}}]
 Let $S_n\subset G\subset O_n^+$ be an easy quantum group with associated category $\CC$ of partitions. If $\CC$ is group-theoretical, then $G$ may be written as a semi-direct product:
 \[C(G)\cong C^*(\Z_2^{*n}/F_n(\mathcal C)) \Join C(S_n)\]
\end{theorem}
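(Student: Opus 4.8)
The plan is to realise the right-hand side as a compact matrix quantum group $G'$, to identify $G'$ as an \emph{easy} quantum group with category $\CC$, and then to invoke the uniqueness part of Theorem~\ref{thm:classification-easy-quantum-groups-by-partitions}; all $C^*$-algebras are taken universal. To build the candidate, set $\Gamma_n:=\Z_2^{*n}/F_n(\CC)$ with canonical generators $g_1,\dots,g_n$. Since $F_n(\CC)$ is $sS_n$-invariant, it is in particular stable under permutations of the $a_i$, so $S_n$ acts on $\Gamma_n$ by permuting the $g_i$; let $\widehat{\Gamma_n}\rtimes S_n$ be the associated semidirect product quantum group, with function algebra $C^*(\Gamma_n)\Join C(S_n)$, which as a $C^*$-algebra is $C^*(\Gamma_n)\ot C(S_n)$ with the coproduct twisted by the action. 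Writing $v=(v_{ij})$ for the fundamental magic unitary of $S_n$, the relevant $n$-dimensional corepresentation is $u=(u_{ij})$ with $u_{ij}:=g_i\ot v_{ij}$, in analogy with the signed-permutation-matrix realisation $H_n=\Z_2^n\rtimes S_n\subset O_n$. I would then record the routine points: $u_{ij}^*=u_{ij}$ because $g_i^2=e$; $u$ is orthogonal since $\sum_j u_{ij}u_{ij}^*=1\ot\sum_j v_{ij}=1\ot 1$ and likewise for columns; $\Delta(u_{ij})=\sum_k u_{ik}\ot u_{kj}$ holds for the twisted coproduct (the bismash-product formula); and $u$ generates $C^*(\Gamma_n)\Join C(S_n)$, since $\sum_j u_{ij}=g_i\ot 1$ recovers $C^*(\Gamma_n)\ot 1$ and then $(g_i\ot 1)^{-1}u_{ij}=1\ot v_{ij}$ recovers $1\ot C(S_n)$. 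Killing $\Gamma_n$ maps $u$ onto $v$, and orthogonality gives a surjection $C(O_n^+)\surj C^*(\Gamma_n)\Join C(S_n)$, so $S_n\subseteq G'\subseteq O_n^+$. Observe that the relation $u_{ij}^2u_{kl}=u_{kl}u_{ij}^2$ characterising group-theoretical quantum groups is automatic here, since $u_{ij}^2=e\ot 1$ is central — the structural reason why a genuine semidirect product, rather than a free wreath product as for $H_n^+$, is the correct ansatz.

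The real work is the computation of the intertwiner spaces of $G'$; by rotation it suffices to treat $\Hom_{G'}(0,l)$, as $\CC$ and the family $\{T_p\}$ are both rotation-stable. Expanding $u^{\ot l}$ with $u_{ij}=g_i\ot v_{ij}$, one finds that $\xi=\sum_i c_i\,e_{i_1}\ot\dots\ot e_{i_l}$ is fixed if and only if $c_{\sigma^{-1}(i)}\,g_{i_1}\cdots g_{i_l}=c_i$ in $\C[\Gamma_n]$ for every $\sigma\in S_n$ and every $i\in\{1,\dots,n\}^l$. Taking $\sigma=\id$ forces $g_{i_1}\cdots g_{i_l}=e$ in $\Gamma_n$ whenever $c_i\neq 0$, and then the remaining equations say that $c$ is constant on $S_n$-orbits. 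Now $g_{i_1}\cdots g_{i_l}=e$ in $\Z_2^{*n}/F_n(\CC)$ means $g_{i_1}\cdots g_{i_l}\in F_n(\CC)$, and arguing as in the proof of Theorem~\ref{thm:F-is-1-1} — reducing this word, and the word of a representing partition of $\CC$, to single leg form and applying Proposition~\ref{LemmaX} — this holds exactly when the partition $\hat p_i\in P(0,l)$ connecting two points iff the corresponding entries of $i$ agree belongs to $\CC$. Since the $S_n$-orbits in $\{1,\dots,n\}^l$ are precisely the sets $\{i:\hat p_i=q\}$ with $q$ a partition of $l$ points on at most $n$ blocks, this identifies $\Hom_{G'}(0,l)$ with $\Span\{\eta_q:q\in\CC(0,l)\}$, where $\eta_q:=\sum_{\hat p_i=q}e_{i_1}\ot\dots\ot e_{i_l}$. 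Finally $T_p(1)=\sum_{q\ge p}\eta_q$, and by Lemma~\ref{LemBlockVerbinden} every coarsening of a partition in $\CC$ is again in $\CC$, so the base change between $\{T_p:p\in\CC\}$ and $\{\eta_q:q\in\CC\}$ is triangular; Möbius inversion on the partition lattice then gives $\Span\{\eta_q:q\in\CC(0,l)\}=\Span\{T_p:p\in\CC(0,l)\}$.

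Hence $\Hom_{G'}(k,l)=\Span\{T_p:p\in\CC(k,l)\}$ for all $k,l$, so $G'$ is easy with category $\CC$, and Theorem~\ref{thm:classification-easy-quantum-groups-by-partitions} forces $G'=G$; the coproduct of $C(G)$ is then identified with the semidirect-product coproduct via the identification of fundamental coreps, which yields $C(G)\cong C^*(\Z_2^{*n}/F_n(\CC))\Join C(S_n)$. Equivalently, one could first obtain the surjection $C(G)\surj C(G')$ directly — the $u_{ij}$ in $C(G')$ satisfy the $\primarypart$-relation by centrality of $u_{ij}^2$ and the relations $w(p,l)=e$ for $p\in\CC_{sl}$ by construction of $\Gamma_n$, and $\CC=\langle\CC_{sl},\primarypart\rangle$ — and then injectivity is again the intertwiner computation. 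The one genuinely delicate point I anticipate is that intertwiner computation: specifically, the passage from ``$g_{i_1}\cdots g_{i_l}=e$ in $\Gamma_n$'' to ``$\hat p_i\in\CC$'' through the single-leg normal form of Section~\ref{SectSingleLeg}, together with the triangularity over $\CC$ supplied by Lemma~\ref{LemBlockVerbinden}; the remaining steps are formal manipulations with the semidirect product.
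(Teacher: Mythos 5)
The paper you were given contains no proof of this statement: it is quoted from the companion article \cite{RaumWeberSemiDirect}, where it is established by quantum-algebraic means (via diagonal subgroups, in a more general setting), and Section \ref{SectConsequences} records that the logical order there is the reverse of yours -- in \cite{RaumWeberSemiDirect} the injectivity of $F$ is \emph{deduced} from this theorem, whereas you deduce the theorem from the combinatorics of the present paper. Your route is a legitimate alternative and I believe it is essentially complete: realise $C^*(\Z_2^{*n}/F_n(\CC))\Join C(S_n)$ as a compact matrix quantum group with $u_{ij}=g_i\ot v_{ij}$, compute the fixed vectors of $u^{\ot l}$, translate the condition $g_{i_1}\dotsm g_{i_l}=e$ in $\Z_2^{*n}/F_n(\CC)$ into $\hat p_i\in\CC$ via single leg forms (Proposition \ref{LemmaX} and the reduced-word argument from the proof of Theorem \ref{thm:F-is-1-1}), and pass from the orbit sums $\eta_q$ to the maps $T_p$ by the triangular relation $T_p(1)=\sum_{q\geq p}\eta_q$, Lemma \ref{LemBlockVerbinden} and M\"obius inversion, before invoking Theorem \ref{thm:classification-easy-quantum-groups-by-partitions}. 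I verified the fixed-point equation $c_{\sigma^{-1}(i)}\,g_{i_1}\dotsm g_{i_l}=c_i$ and the orbit analysis; both are correct, and you have correctly isolated the genuinely delicate step. What your approach buys is a proof of the structure theorem that stays entirely inside the combinatorial framework of this paper; what the approach of \cite{RaumWeberSemiDirect} buys is independence from the single-leg machinery and applicability beyond the easy case.

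Three points to repair or make explicit. First, $u_{ij}^2=(g_i\ot v_{ij})^2=e\ot v_{ij}$, not $e\ot 1$ (the latter would contradict $\sum_j u_{ij}^2=1$ for $n>1$); since $e\ot v_{ij}$ lies in the central subalgebra $1\ot C(S_n)$, your conclusion that $u_{ij}^2u_{kl}=u_{kl}u_{ij}^2$ holds automatically survives unchanged. Second, the set $\{i:\,g_{i_1}\dotsm g_{i_l}=e\}$ must be a union of $S_n$-orbits for your description of the fixed space to close up -- this is a second place where the $sS_n$-invariance of $F_n(\CC)$ enters, beyond the well-definedness of the $S_n$-action on $\Z_2^{*n}/F_n(\CC)$. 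Third, only partitions with at most $n$ blocks occur as kernel partitions $\hat p_i$, so $\eta_q=0$ for $q$ with more than $n$ blocks; the M\"obius inversion on the interval above a fixed partition is unaffected, but this should be said when matching $\Span\{T_p:p\in\CC(0,l)\}$ with $\Span\{\eta_q:q\in\CC(0,l)\}$.
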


This reveals a lot of information about the quantum group $G$. 
See \cite{RaumWeberSemiDirect} for consequences of this picture. 
The main difference of the two approaches of the present article and \cite{RaumWeberSemiDirect} is how to prove that the map $F$ of Theorem \ref{thm:F-is-1-1} is injective. In the present article, this is obtained by purely combinatorial means whereas in \cite{RaumWeberSemiDirect} it is derived as a consequence of the above theorem. Nevertheless, the results of \cite[Sect. 5.1]{RaumWeberSemiDirect} can also be deduced from the present article. To any variety of  groups, we may associate an $sS_\infty$-invariant normal subgroup of $\ZZ_2^{*\infty}$. Since there are uncountably many varieties of groups and since $F$ is bijective, we infer that there are uncountably many easy quantum groups. See \cite[Sect. 5.1]{RaumWeberSemiDirect} for details.

\section{Associated C$^*$-algebras}\label{SectAssocCStar}

In this section we briefly study the C$^*$-algebras associated to group-theoretical easy quantum groups. Since these quantum groups in turn correspond to group-theoretical categories of partitions, let us speak of a C$^*$-algebra $A_\CC(n)$ associated to a category $\CC$.

\begin{proposition}\label{GestaltAprimary}
The C$^*$-algebra $A_\CC(n)=C(H_n^{[\infty]})$  associated to the category $\CC=\langle\primarypart\rangle$ is the universal C$^*$-algebras generated by elements $u_{ij}$, $i,j,=1,\ldots,n$ such that
\begin{enumerate}
 \item the $u_{ij}$ are local symmetries (i.e. $u_{ij}=u_{ij}^*$ and $u_{ij}^2$ is a projection),
 \item the projections $u_{ij}^2$ fulfill $\sum_k u_{ik}^2=\sum_ku_{kj}^2=1$ for all $i,j$,
 \item and we have $u_{ij}^2u_{kl}=u_{kl}u_{ij}^2$ for all $i,j,k,l$.
\end{enumerate}
\end{proposition}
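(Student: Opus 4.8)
The plan is to identify $A_\CC(n)$ as the easy quantum group C$^*$-algebra attached to $\CC = \langle\primarypart\rangle$ and to translate the three relations in the statement into the single intertwiner condition $T_{\primarypart} \in \Hom(3,3)$, plus the standard relations forced by being an easy quantum group $S_n \subset G \subset O_n^+$. Recall that by definition $A_\CC(n) = C(G)$ where $G$ is the easy quantum group with $\Hom(k,l) = \Span\{T_p \mid p \in \CC(k,l)\}$, and that by the Tannaka--Krein machinery (Theorem \ref{thm:classification-easy-quantum-groups-by-partitions}) a category generated by partitions $p_1,\dots,p_m$ produces the universal C$^*$-algebra with generators $u_{ij}$ subject to the relations coming from $O_n^+$ together with $T_{p_i} u^{\otimes k_i} = u^{\otimes l_i} T_{p_i}$ for each generator. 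So the proof is really a bookkeeping of which concrete relations on the $u_{ij}$ correspond to which partitions.

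First I would record the ``base'' relations. Every easy quantum group sits between $S_n$ and $O_n^+$, so $u$ is orthogonal: $u = \bar u$ (equivalently $u_{ij} = u_{ij}^*$) and $u u^t = u^t u = 1$. One checks that $\langle\primarypart\rangle$ contains the pair partition $\paarpart$ and the identity $\idpart$ (these are in every category), and also the four-block $\vierpart$ (it is a rotated version of a partition obtained from $k_1$; concretely $\langle\primarypart\rangle$ is hyperoctahedral, cf.\ Remark \ref{RemKL} identifying it with $H_n^{[\infty]}$). The partition $\vierpart$ is exactly the one whose intertwiner relation says that the $u_{ij}$ are partial isometries with $u_{ij}^2 = u_{ij}^3$... more precisely, for $H_n^+ = \langle\vierpart\rangle$ the generators satisfy $u_{ij} = u_{ij}^*$ and the $u_{ij}^2$ are projections with $\sum_k u_{ik}^2 = \sum_k u_{kj}^2 = 1$ (this is the known presentation of $H_n^+$ from \cite{BanicaBichonCollins07}). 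This gives exactly conditions (1) and (2) in the statement. So the only remaining thing is to show that adjoining the single generator $\primarypart$ to $\langle\vierpart\rangle$ adds precisely the commutation relation (3).

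The key computation is therefore: $T_{\primarypart} u^{\otimes 3} = u^{\otimes 3} T_{\primarypart}$ if and only if $u_{ij}^2 u_{kl} = u_{kl} u_{ij}^2$ for all $i,j,k,l$. I would do this by writing out $T_{\primarypart}$ explicitly using $\delta_{\primarypart}$: since $\primarypart$ has blocks $\{1,2,2',3'\}$ and $\{3,1'\}$, we get $T_{\primarypart}(e_a \otimes e_b \otimes e_c) = \sum_{j_1,j_2,j_3} \delta_{\primarypart}(abc, j_1 j_2 j_3) e_{j_1}\otimes e_{j_2}\otimes e_{j_3}$, and $\delta_{\primarypart}$ is $1$ exactly when $a = b = j_2 = j_3$ and $c = j_1$, so $T_{\primarypart}(e_a\otimes e_b\otimes e_c) = \delta_{ab}\, e_c \otimes e_a \otimes e_a$. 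Then one expands both $T_{\primarypart} u^{\otimes 3}(e_a\otimes e_b\otimes e_c)$ and $u^{\otimes 3} T_{\primarypart}(e_a\otimes e_b\otimes e_c)$ in the basis $e_{j_1}\otimes e_{j_2}\otimes e_{j_3}$ and compares coefficients; using the already-established relations (1) and (2) to simplify sums like $\sum_k u_{ik}^2 = 1$, the equality of coefficients reduces exactly to $u_{j_2 a}^2 u_{j_1 c} = u_{j_1 c} u_{j_3 a}^2$ in the relevant indices, which after renaming is relation (3). Finally I would invoke Tannaka--Krein once more: since $\langle\primarypart\rangle = \langle \vierpart, \primarypart\rangle$, the algebra $A_\CC(n)$ is the universal C$^*$-algebra with generators $u_{ij}$ and relations = (orthogonality) + ($\vierpart$-relations) + ($\primarypart$-relation) = (1) + (2) + (3), which is the claim.

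The main obstacle I anticipate is the direction ``relations (1)--(3) imply all intertwiner relations of $\langle\primarypart\rangle$'' — i.e.\ making sure that imposing only the generator relations really does cut out the full category and nothing smaller, so that the universal algebra in the statement is genuinely $C(G)$ and not a proper quotient or a larger algebra. This is exactly what the Tannaka--Krein correspondence of Theorem \ref{thm:classification-easy-quantum-groups-by-partitions} guarantees (the relations from a generating set of $\CC$ generate the same ideal as the relations from all of $\CC$), so the subtlety is purely in citing it correctly; there is no new analytic input. A secondary, purely mechanical nuisance is keeping the index bookkeeping in the $T_{\primarypart}$ computation straight — in particular correctly using $u_{ij} = u_{ij}^*$ so that $\langle T_{\primarypart} u^{\otimes 3} \xi, \eta\rangle$ type manipulations land on $u_{ij}^2$ rather than $u_{ij}u_{ij}^*$ — but this is routine once the explicit formula for $T_{\primarypart}$ is in hand.
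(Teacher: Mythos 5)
Your proposal is correct and follows essentially the same route as the paper: deduce (1) and (2) from the inclusion $\langle\vierpart\rangle\subset\langle\primarypart\rangle$ via the known presentation of $H_n^+$, then verify by direct computation that the intertwiner relation $T_{\primarypart}u^{\otimes 3}=u^{\otimes 3}T_{\primarypart}$ is equivalent to relation (3), with Tannaka--Krein guaranteeing that the relations coming from a generating set of the category suffice. The paper is merely terser, leaving the $T_{\primarypart}$ computation to the reader and deriving the projection property of $u_{ij}^2$ from the $\vierpart$-relations rather than quoting it.
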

\begin{proof} 
Since $\langle\vierpart\rangle$ is contained in $\langle\primarypart\rangle$, the C$^*$-algebra $A_\CC(n)$ fulfills the relations  $u_{ik}u_{jk}=u_{ki}u_{kj}=0$ whenever $i\neq j$ and $\sum_k u_{ik}^2=\sum_ku_{kj}^2=1$, as well as $u_{ij}=u_{ij}^*$, for all $i,j$ (see \cite{Weber13} for the relations of the C$^*$-algebra associated to the category $\langle\vierpart\rangle$ or rather of the free hyperoctahedral quantum group $H_n^+$).
It follows that $u_{ij}^2$ is a projection, since $u_{ij}^2=u_{ij}^2(\sum_k u_{ik}^2)=\sum_ku_{ij}(u_{ij}u_{ik})u_{ik}=u_{ij}^4$. 
A direct computation verifies that $T_p u^{\otimes 3}=u^{\otimes 3}T_p$ for $p=\primarypart$ is equivalent to (3). (See also \cite{Weber13} or \cite{RaumWeberVolleKlassif} for such computations.)
\end{proof}

This proposition shows that the elements $u_{ij}^2$ fulfill the relations of $S_n$, the (classical) permutation group (or rather of $C(S_n)$). The squares of the elements $u_{ij}$ of the above C$^*$-algebra thus behave like commutative elements. See also the picture of group-theoretical easy quantum groups  as semi-direct products \cite{RaumWeberSemiDirect}.

In the C$^*$-algebra $A_\CC(n)$ associated to a group-theoretical category $\cC$, the relations on the generators may be read directly from the partitions in single leg form. Let $p=a_{i(1)} \dotsm a_{i(l)} \in P(0,l)$ be a partition without upper points in single leg form. We consider $p$ as a word in the letters $a_1, \dotsc ,a_k$ (labelled from left to right). If we replace the letters $a_i$, $1 \leq i \leq k$, in $p$ by some choice of generators $u_{ij}$, $1 \leq i,j \leq n$, we obtain an element in $\AKat$; replacing the letters by the according elements $u_{ij}^2$ yields a projection $q \in \AKat$.

\begin{proposition}
\label{PropWordPartialIsom}
 Let $\mathcal C$ be a group-theoretical category and let $p = a_{i(1)} \dotsm a_{i(l)}$ be a partition in single leg form.
 The following assertions are equivalent:
\begin{enumerate}
 \item $p \in \mathcal C$.
 \item $a_{i(1)} \dotsm \, a_{i(l)} = q$ in $\AKat$ for all choices $a_r \in \{u_{ij} \; | \; i,j = 1, \dotsc, n\}$, $1 \leq r \leq k$, where $q$ is the according range projection.
 \item For some $1 \leq s \leq l$, we have $q a_{i(1)}\dotsm \, a_{i(s)} = q a_{i(l)} \dotsm a_{i(s+1)}$ in $\AKat$ for all choices $a_r \in \{u_{ij} \; | \; i,j = 1, \dotsc, n\}$, $1 \leq r \leq k$, where $q$ is the according range projection.
\end{enumerate}
\end{proposition}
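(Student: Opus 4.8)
The plan is to prove the cyclic chain of implications $(1) \Rightarrow (2) \Rightarrow (3) \Rightarrow (1)$, translating between partitions and C$^*$-algebraic relations via the intertwiner map $p \mapsto T_p$ and the defining relations of $\AKat$ established in Proposition~\ref{GestaltAprimary}.

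\emph{From $(1)$ to $(2)$.} First I would observe that, since $\CC$ is group-theoretical, $\langle\primarypart\rangle \subseteq \CC$, so by Proposition~\ref{GestaltAprimary} the generators $u_{ij}$ are local symmetries with commuting range projections $u_{ij}^2$ satisfying the magic-biunitary-type relations $\sum_k u_{ik}^2 = \sum_k u_{kj}^2 = 1$. Assuming $p \in \CC$, the intertwiner relation $T_p u^{\otimes l} = u^{\otimes 0} T_p = T_p$ holds in $\AKat$ (using that $p$ has no upper points, so $k=0$ and $T_p$ maps $\C$ into $(\C^n)^{\otimes l}$, i.e.\ $T_p$ picks out a vector). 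Reading this relation off on the standard basis and using the orthogonality relations on the $u_{ij}$'s to collapse sums, one extracts precisely the word identity $a_{i(1)} \dotsm a_{i(l)} = q$ for each substitution of generators, where $q$ is the range projection obtained by replacing each letter with its square; this is exactly the kind of computation indicated in \cite{Weber13} and \cite{RaumWeberVolleKlassif}, the key input being that $u_{ij}^2$ is central so all the range projections can be gathered on one side.

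\emph{From $(2)$ to $(3)$.} This is the easy step: assuming the full word identity $a_{i(1)}\dotsm a_{i(l)} = q$ for all substitutions, I would multiply on the left by $q$ (which is a projection commuting with everything relevant, being a product of central projections $u_{ij}^2$), giving $q\, a_{i(1)}\dotsm a_{i(l)} = q$. Since each $a_{i(r)}$ is a local symmetry, $a_{i(r)}^{-1}$ makes sense on the range of its square, and one rewrites $q\, a_{i(1)}\dotsm a_{i(s)} = q\, a_{i(l)}^{-1}\dotsm a_{i(s+1)}^{-1} = q\, a_{i(l)}\dotsm a_{i(s+1)}$ for any chosen $1 \le s \le l$, using $a_{i(r)} = a_{i(r)}^{-1}$ on the support of $q$; more carefully, $q a = a^{-1} q' $-type manipulations with the central squares. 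The choice of $s$ is free precisely because the identity holds for the whole word.

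\emph{From $(3)$ to $(1)$.} Here I would reverse the computation of the first implication: the relation $q\, a_{i(1)}\dotsm a_{i(s)} = q\, a_{i(l)}\dotsm a_{i(s+1)}$ for all choices of generators should be shown to encode exactly $T_p u^{\otimes l} = T_p$, hence $T_p \in \Hom(l,0)$, hence (by the Tannaka--Krein correspondence, Theorem~\ref{thm:classification-easy-quantum-groups-by-partitions}, together with the fact that $\CC$ already contains $\langle\primarypart\rangle$, so the span of the $T_q$ for $q \in \CC$ is the full intertwiner space) $p \in \CC$. The point is that $p$ is in single leg form, so $T_p$ is genuinely a nonzero intertwiner candidate and the relation $(3)$ is nontrivial; reading it on basis vectors and resumming gives back the defining equation of $T_p$ being fixed by $u^{\otimes l}$.

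\textbf{Main obstacle.} The technical heart is the bookkeeping in $(1) \Leftrightarrow (2)$: carefully verifying that the intertwiner equation $T_p u^{\otimes l} = T_p$, when expanded in coordinates using $\delta_p$, is equivalent to the stated word identity after using the relations of Proposition~\ref{GestaltAprimary} to eliminate the sums (in particular that $\delta_p$ being supported on ``equal indices within blocks'' exactly matches identifying letters, and that centrality of the $u_{ij}^2$ lets all range projections be pulled to one side as a single projection $q$). This is routine in spirit but requires attention to which indices are summed and to the single-leg hypothesis ensuring no letter is repeated consecutively (so that no $a_{i(r)}^2$ appears inside the word and $q$ is genuinely the ``range'' projection of the partial isometry $a_{i(1)}\dotsm a_{i(l)}$). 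The implication $(2)\Rightarrow(3)$ is a short algebraic manipulation, and $(3)\Rightarrow(1)$ just runs $(1)\Rightarrow(2)$ backwards once one checks $(3)$ with a convenient choice such as $s = l$ already suffices to recover the full intertwiner relation.
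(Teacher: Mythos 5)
Your proposal follows essentially the same route as the paper: both reduce (1) to the coordinate form of the intertwiner relation $u^{\otimes l}(T_p\otimes 1)=T_p\otimes 1$ and use the relations of $C(H_n^{[\infty]})$ from Proposition~\ref{GestaltAprimary} (row/column orthogonality and centrality of the projections $u_{ij}^2$) to collapse it to the word identity (2), with (2)$\Leftrightarrow$(3) obtained by absorbing the central squares into $q$. The one point to tighten is that in (3) the index $s$ is existentially quantified, so for (3)$\Rightarrow$(1) you must first pass from an arbitrary given $s$ to the case $s=l$ by right-multiplying with $a_{i(s+1)}\dotsm a_{i(l)}$ and absorbing the resulting squares into $q$ --- exactly the manipulation you already describe for (2)$\Rightarrow$(3).
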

\begin{proof} 
The linear map $T_p: \C \to (\C^n)^{\otimes k}$ associated with $p$ is given by
\[
  T_p(1)
  =
  \sum_{i(1), \dotsc, i(k) = 1}^n
  {
    \delta_p(i)e_{i(1)} \otimes \dotsm \otimes e_{i(k)}
  }   
\]
We have
\[
  u^{\otimes k} (T_p \otimes 1)(1 \ot 1)
  =
  \sum_{i(1), \dotsc, i(k) = 1}^n
  {
    e_{i(1)} \otimes \dotsm \otimes e_{i(k)} \otimes
    \left( \sum_{j(1), \dotsc, j(k) = 1}^n \delta_p(j) \cdot u_{i(1) j(1)} \dotsm \, u_{i(k)j(k)} \right)
  }
  \]
so that $p \in \mathcal C$, if and only if for all multi-indices $i=(i(1), \dotsc ,i(k))$ the equation:
\[
  \sum_{j(1), \dotsc, j(k) = 1}^n
  {
    \delta_p(j) \cdot u_{i(1) j(1)} \dotsm \, u_{i(k)j(k)}
  }
  =
  \delta_p(i)
\]
holds.  Now, assume (1) and let us show (2).  Make a choice of $a_r \in \{ u_{ij} | i,j = 1, \dotsc, n \}$ for all ${r \in \{1, \dotsc, k\}}$.  Then there are multi-indices $i$ and $j$ satisfying $\delta_p(i) = \delta_p(j) = 1$ such that $a_{i(1)} \dotsm \, a_{i(k)} = u_{i(1)j(1)} \dotsm \, u_{i(k) j(k)}$.  Let $q$ be the projection given by $q := u_{i(1)j(1)}^2 \dotsm \, u_{i(k) j(k)}^2$.  Then, using $u_{ij}u_{ir}=\delta_{jr}u_{ij}^2$ we obtain:
\begin{align*}
  u_{i(1)j(1)} \dotsm u_{i(k)j(k)}
  & = 
  u_{i(1)j(1)} \dotsm u_{i(k)j(k)} \left( \sum_{r(1), \dotsc, r(k) = 1}^n \delta_p(r) \cdot u_{i(k) r(k)} \dotsm \, u_{i(1)r(1)} \right) \\
  & =
  \sum_{r(1), \dotsc, r(k) = 1}^n
    \delta_p(r) \cdot u_{i(1)j(1)} \dotsm \, u_{i(k -1)j(k - 1)} \\
    & \qquad \qquad
    (\delta_{j(k) r(k)} u_{i(k)j(k)}^2)
    u_{i(k -1)r(k - 1)} \dotsm u_{i(1)r(1)}
   \\
  & = 
  \sum_{r(1), \dotsc, r(k) = 1}^n \delta_p(r) \cdot \delta_{i(1) r(1)} u_{i(1)j(1)}^2 \dotsm \, \delta_{i(k)r(k)} u_{i(k) j(k)}^2 \\
  & =
  q
\end{align*}
This proves (2). 
Conversely, assume (2) and let $i$ be any multi-index.  If $\delta_p(i) = 0$, then $u_{i(1)j(1)} \dotsm \, u_{i(k)j(k)} = 0$ for any multi-index $j$ that satisfies $\delta_p(j) = 1$, since in this product there are at least two local symmetries that have mutually orthogonal support in the centre of $\AKat$.  Hence:
\[
  \sum_{j(1), \dotsc, j(k) = 1}^n \delta_p(j) \cdot u_{i(1)j(1)} \dotsm \, u_{i(k)j(k)}
  =
  0
   \]
Similarly if $\delta_p(i) = 1$, then, using the assumption (2):
\[
  u_{i(1)j(1)}^2 \dotsm \, u_{i(k)j(k)}^2
  =
  \begin{cases}
    u_{i(1)j(1)} \dotsm \, u_{i(k)j(k)}  , & \text{if } \delta_p(j) = 1  , \\
    0  , & \text{otherwise}
  \end{cases}
\]
We obtain that:
\[
  \sum_{j(1), \dotsc, j(k) = 1}^n \delta_p(j) \cdot u_{i(1)j(1)} \dotsm \, u_{i(k)j(k)}
  =
  \sum_{j(1), \dotsc, j(k) = 1}^n u_{i(1)j(1)}^2 \dotsm \, u_{i(k)j(k)}^2
  =
  1
 \]
This proves (1).  The assertions (2) and (3) are equivalent, since all projections $a_r^2$, $1 \leq r \leq m$ are absorbed by $q$ and $qa_{i(1)} \dotsm \, a_{i(k)} = a_{i(1)} \dotsm \, a_{i(k)}$.
\end{proof}

\end{document}